\newtheorem{Remark}{Remark}
\newtheorem{Corollary}{Corollary}
\newtheorem{Definition}{Definition}
\newenvironment{Proof}{\noindent{\em Proof:\/}}{\hfill $\Box$\par}
\newtheorem{Theorem}{Theorem}
\newtheorem{Lemma}{Lemma}
\newtheorem{Assumption}{Assumption}
\let\pdfoutput=\undefined\fi
\chardef\@x10\chardef\@xv60
\def\tcitime{
\def\@time{%
  \@minute\time\@hour\@minute\divide\@hour\@xv
  \ifnum\@hour<\@x 0\fi\the\@hour:%
  \multiply\@hour\@xv\advance\@minute-\@hour
  \ifnum\@minute<\@x 0\fi\the\@minute
  }}%
\def\x@hyperref#1#2#3{%
   % Turn off various catcodes before reading parameter 4
   \catcode`\~ = 12
   \catcode`\$ = 12
   \catcode`\_ = 12
   \catcode`\# = 12
   \catcode`\& = 12
   \catcode`\% = 12
   \y@hyperref{#1}{#2}{#3}%
}
\def\y@hyperref#1#2#3#4{%
   #2\ref{#4}#3
   \catcode`\~ = 13
   \catcode`\$ = 3
   \catcode`\_ = 8
   \catcode`\# = 6
   \catcode`\& = 4
   \catcode`\% = 14
}
\def\QCTOpt[#1]#2{%
  \def\QCTOptB{#1}
  \def\QCTOptA{#2}
}
\def\QCTNOpt#1{%
  \def\QCTOptA{#1}
  \let\QCTOptB\empty
}
\def\Qct{%
  \@ifnextchar[{%
    \QCTOpt}{\QCTNOpt}
}
\def\QCBOpt[#1]#2{%
  \def\QCBOptB{#1}%
  \def\QCBOptA{#2}%
}
\def\QCBNOpt#1{%
  \def\QCBOptA{#1}%
  \let\QCBOptB\empty
}
\def\Qcb{%
  \@ifnextchar[{%
    \QCBOpt}{\QCBNOpt}%
}
\def\PrepCapArgs{%
  \ifx\QCBOptA\empty
    \ifx\QCTOptA\empty
      {}%
    \else
      \ifx\QCTOptB\empty
        {\QCTOptA}%
      \else
        [\QCTOptB]{\QCTOptA}%
      \fi
    \fi
  \else
    \ifx\QCBOptA\empty
      {}%
    \else
      \ifx\QCBOptB\empty
        {\QCBOptA}%
      \else
        [\QCBOptB]{\QCBOptA}%
      \fi
    \fi
  \fi
}
\def\GRAPHICSPS#1{%
 \ifcase\GRAPHICSTYPE%\GRAPHICSTYPE=0
   \special{ps: #1}%
 \or%\GRAPHICSTYPE=1
   \special{language "PS", include "#1"}%
%%%\or%\GRAPHICSTYPE=2
%%%  #1%
 \fi
}%
\def\graffile#1#2#3#4{%
%%% \ifnum\GRAPHICSTYPE=\tw@
%%%  %Following if using psfig
%%%  \@ifundefined{psfig}{\input psfig.tex}{}%
%%%  \psfig{file=#1, height=#3, width=#2}%
%%% \else
  %Following for all others
  % JCS - added BOXTHEFRAME, see below
    \bgroup
	   \@inlabelfalse
       \leavevmode
       \@ifundefined{bbl@deactivate}{\def~{\string~}}{\activesoff}%
        \raise -#4 \BOXTHEFRAME{%
           \hbox to #2{\raise #3\hbox to #2{\null #1\hfil}}}%
    \egroup
}%
\def\draftbox#1#2#3#4{%
 \leavevmode\raise -#4 \hbox{%
  \frame{\rlap{\protect\tiny #1}\hbox to #2%
   {\vrule height#3 width\z@ depth\z@\hfil}%
  }%
 }%
}%
\let\nographics=\@msidraft
\newif\ifwasdraft
\def\GRAPHIC#1#2#3#4#5{%
   \ifnum\@msidraft=\@ne\draftbox{#2}{#3}{#4}{#5}%
   \else\graffile{#1}{#3}{#4}{#5}%
   \fi
}
\def\addtoLaTeXparams#1{%
    \edef\LaTeXparams{\LaTeXparams #1}}%
\newif\ifBoxFrame \BoxFramefalse
\newif\ifOverFrame \OverFramefalse
\newif\ifUnderFrame \UnderFramefalse
\def\BOXTHEFRAME#1{%
   \hbox{%
      \ifBoxFrame
         \frame{#1}%
      \else
         {#1}%
      \fi
   }%
}
\def\doFRAMEparams#1{\BoxFramefalse\OverFramefalse\UnderFramefalse\readFRAMEparams#1\end}%
\def\readFRAMEparams#1{%
 \ifx#1\end%
  \let\next=\relax
  \else
  \ifx#1i\dispkind=\z@\fi
  \ifx#1d\dispkind=\@ne\fi
  \ifx#1f\dispkind=\tw@\fi
  \ifx#1t\addtoLaTeXparams{t}\fi
  \ifx#1b\addtoLaTeXparams{b}\fi
  \ifx#1p\addtoLaTeXparams{p}\fi
  \ifx#1h\addtoLaTeXparams{h}\fi
  \ifx#1X\BoxFrametrue\fi
  \ifx#1O\OverFrametrue\fi
  \ifx#1U\UnderFrametrue\fi
  \ifx#1w
    \ifnum\@msidraft=1\wasdrafttrue\else\wasdraftfalse\fi
    \@msidraft=\@ne
  \fi
  \let\next=\readFRAMEparams
  \fi
 \next
 }%
\def\IFRAME#1#2#3#4#5#6{%
      \bgroup
      \let\QCTOptA\empty
      \let\QCTOptB\empty
      \let\QCBOptA\empty
      \let\QCBOptB\empty
      #6%
      \parindent=0pt
      \leftskip=0pt
      \rightskip=0pt
      \setbox0=\hbox{\QCBOptA}%
      \@tempdima=#1\relax
      \ifOverFrame
          % Do this later
          \typeout{This is not implemented yet}%
          \show\HELP
      \else
         \ifdim\wd0>\@tempdima
            \advance\@tempdima by \@tempdima
            \ifdim\wd0 >\@tempdima
               \setbox1 =\vbox{%
                  \unskip\hbox to \@tempdima{\hfill\GRAPHIC{#5}{#4}{#1}{#2}{#3}\hfill}%
                  \unskip\hbox to \@tempdima{\parbox[b]{\@tempdima}{\QCBOptA}}%
               }%
               \wd1=\@tempdima
            \else
               \textwidth=\wd0
               \setbox1 =\vbox{%
                 \noindent\hbox to \wd0{\hfill\GRAPHIC{#5}{#4}{#1}{#2}{#3}\hfill}\\%
                 \noindent\hbox{\QCBOptA}%
               }%
               \wd1=\wd0
            \fi
         \else
            \ifdim\wd0>0pt
              \hsize=\@tempdima
              \setbox1=\vbox{%
                \unskip\GRAPHIC{#5}{#4}{#1}{#2}{0pt}%
                \break
                \unskip\hbox to \@tempdima{\hfill \QCBOptA\hfill}%
              }%
              \wd1=\@tempdima
           \else
              \hsize=\@tempdima
              \setbox1=\vbox{%
                \unskip\GRAPHIC{#5}{#4}{#1}{#2}{0pt}%
              }%
              \wd1=\@tempdima
           \fi
         \fi
         \@tempdimb=\ht1
         %\advance\@tempdimb by \dp1
         \advance\@tempdimb by -#2
         \advance\@tempdimb by #3
         \leavevmode
         \raise -\@tempdimb \hbox{\box1}%
      \fi
      \egroup%
}%
\def\DFRAME#1#2#3#4#5{%
  \vspace\topsep
  \hfil\break
  \bgroup
     \leftskip\@flushglue
	 \rightskip\@flushglue
	 \parindent\z@
	 \parfillskip\z@skip
     \let\QCTOptA\empty
     \let\QCTOptB\empty
     \let\QCBOptA\empty
     \let\QCBOptB\empty
	 \vbox\bgroup
        \ifOverFrame 
           #5\QCTOptA\par
        \fi
        \GRAPHIC{#4}{#3}{#1}{#2}{\z@}%
        \ifUnderFrame 
           \break#5\QCBOptA
        \fi
	 \egroup
  \egroup
  \vspace\topsep
  \break
}%
\def\FFRAME#1#2#3#4#5#6#7{%
 %If float.sty loaded and float option is 'h', change to 'H'  (gp) 1998/09/05
  \@ifundefined{floatstyle}
    {%floatstyle undefined (and float.sty not present), no change
     \begin{figure}[#1]%
    }
    {%floatstyle DEFINED
	 \ifx#1h%Only the h parameter, change to H
      \begin{figure}[H]%
	 \else
      \begin{figure}[#1]%
	 \fi
	}
  \let\QCTOptA\empty
  \let\QCTOptB\empty
  \let\QCBOptA\empty
  \let\QCBOptB\empty
  \ifOverFrame
    #4
    \ifx\QCTOptA\empty
    \else
      \ifx\QCTOptB\empty
        \caption{\QCTOptA}%
      \else
        \caption[\QCTOptB]{\QCTOptA}%
      \fi
    \fi
    \ifUnderFrame\else
      \label{#5}%
    \fi
  \else
    \UnderFrametrue%
  \fi
  \begin{center}\GRAPHIC{#7}{#6}{#2}{#3}{\z@}\end{center}%
  \ifUnderFrame
    #4
    \ifx\QCBOptA\empty
      \caption{}%
    \else
      \ifx\QCBOptB\empty
        \caption{\QCBOptA}%
      \else
        \caption[\QCBOptB]{\QCBOptA}%
      \fi
    \fi
    \label{#5}%
  \fi
  \end{figure}%
 }%
\def\makeactives{
  \catcode`\"=\active
  \catcode`\;=\active
  \catcode`\:=\active
  \catcode`\'=\active
  \catcode`\~=\active
}
   \gdef\activesoff{%
      \def"{\string"}%
      \def;{\string;}%
      \def:{\string:}%
      \def'{\string'}%
      \def~{\string~}%
      %\bbl@deactivate{"}%
      %\bbl@deactivate{;}%
      %\bbl@deactivate{:}%
      %\bbl@deactivate{'}%
    }
\def\FRAME#1#2#3#4#5#6#7#8{%
 \bgroup
 \ifnum\@msidraft=\@ne
   \wasdrafttrue
 \else
   \wasdraftfalse%
 \fi
 \def\LaTeXparams{}%
 \dispkind=\z@
 \def\LaTeXparams{}%
 \doFRAMEparams{#1}%
 \ifnum\dispkind=\z@\IFRAME{#2}{#3}{#4}{#7}{#8}{#5}\else
  \ifnum\dispkind=\@ne\DFRAME{#2}{#3}{#7}{#8}{#5}\else
   \ifnum\dispkind=\tw@
    \edef\@tempa{\noexpand\FFRAME{\LaTeXparams}}%
    \@tempa{#2}{#3}{#5}{#6}{#7}{#8}%
    \fi
   \fi
  \fi
  \ifwasdraft\@msidraft=1\else\@msidraft=0\fi{}%
  \egroup
 }%
\def\TEXUX#1{"texux"}
\long\def\QQQ#1#2{%
     \long\expandafter\def\csname#1\endcsname{#2}}%
\long\def\QQA#1#2{}%
\def\QTR#1#2{{\csname#1\endcsname {#2}}}%
\def\EXPAND#1[#2]#3{}%
\def\NOEXPAND#1[#2]#3{}%
\def\LaTeXparent#1{}%
\def\ChildStyles#1{}%
\def\ChildDefaults#1{}%
\def\QTagDef#1#2#3{}%
  \providecommand{\UNICODE}[2][]{\protect\rule{.1in}{.1in}}
  \providecommand{\U}[1]{\protect\rule{.1in}{.1in}}
\def\QQfnmark#1{\footnotemark}
 \def\abstract{%
  \if@twocolumn
   \section*{Abstract (Not appropriate in this style!)}%
   \else \small 
   \begin{center}{\bf Abstract\vspace{-.5em}\vspace{\z@}}\end{center}%
   \quotation 
   \fi
  }%
   \def\registered{\relax\ifmmode{}\r@gistered
                    \else$\m@th\r@gistered$\fi}%
 \def\r@gistered{^{\ooalign
  {\hfil\raise.07ex\hbox{$\scriptstyle\rm\text{R}$}\hfil\crcr
  \mathhexbox20D}}}}{}%
\newdimen\theight
\def\newfmtname{LaTeX2e}
  \DeclareOldFontCommand{\rm}{\normalfont\rmfamily}{\mathrm}
  \DeclareOldFontCommand{\sf}{\normalfont\sffamily}{\mathsf}
  \DeclareOldFontCommand{\tt}{\normalfont\ttfamily}{\mathtt}
  \DeclareOldFontCommand{\bf}{\normalfont\bfseries}{\mathbf}
  \DeclareOldFontCommand{\it}{\normalfont\itshape}{\mathit}
  \DeclareOldFontCommand{\sl}{\normalfont\slshape}{\@nomath\sl}
  \DeclareOldFontCommand{\sc}{\normalfont\scshape}{\@nomath\sc}
\def\alpha{{\Greekmath 010B}}%
\def\beta{{\Greekmath 010C}}%
\def\gamma{{\Greekmath 010D}}%
\def\delta{{\Greekmath 010E}}%
\def\epsilon{{\Greekmath 010F}}%
\def\zeta{{\Greekmath 0110}}%
\def\eta{{\Greekmath 0111}}%
\def\theta{{\Greekmath 0112}}%
\def\iota{{\Greekmath 0113}}%
\def\kappa{{\Greekmath 0114}}%
\def\lambda{{\Greekmath 0115}}%
\def\mu{{\Greekmath 0116}}%
\def\nu{{\Greekmath 0117}}%
\def\xi{{\Greekmath 0118}}%
\def\pi{{\Greekmath 0119}}%
\def\rho{{\Greekmath 011A}}%
\def\sigma{{\Greekmath 011B}}%
\def\tau{{\Greekmath 011C}}%
\def\upsilon{{\Greekmath 011D}}%
\def\phi{{\Greekmath 011E}}%
\def\chi{{\Greekmath 011F}}%
\def\psi{{\Greekmath 0120}}%
\def\omega{{\Greekmath 0121}}%
\def\varepsilon{{\Greekmath 0122}}%
\def\vartheta{{\Greekmath 0123}}%
\def\varpi{{\Greekmath 0124}}%
\def\varrho{{\Greekmath 0125}}%
\def\varsigma{{\Greekmath 0126}}%
\def\varphi{{\Greekmath 0127}}%
\def\nabla{{\Greekmath 0272}}
\def\FindBoldGroup{%
   {\setbox0=\hbox{$\mathbf{x\global\edef\theboldgroup{\the\mathgroup}}$}}%
}
\def\Greekmath#1#2#3#4{%
    \if@compatibility
        \ifnum\mathgroup=\symbold
           \mathchoice{\mbox{\boldmath$\displaystyle\mathchar"#1#2#3#4$}}%
                      {\mbox{\boldmath$\textstyle\mathchar"#1#2#3#4$}}%
                      {\mbox{\boldmath$\scriptstyle\mathchar"#1#2#3#4$}}%
                      {\mbox{\boldmath$\scriptscriptstyle\mathchar"#1#2#3#4$}}%
        \else
           \mathchar"#1#2#3#4% 
        \fi 
    \else 
        \FindBoldGroup
        \ifnum\mathgroup=\theboldgroup % For 2e
           \mathchoice{\mbox{\boldmath$\displaystyle\mathchar"#1#2#3#4$}}%
                      {\mbox{\boldmath$\textstyle\mathchar"#1#2#3#4$}}%
                      {\mbox{\boldmath$\scriptstyle\mathchar"#1#2#3#4$}}%
                      {\mbox{\boldmath$\scriptscriptstyle\mathchar"#1#2#3#4$}}%
        \else
           \mathchar"#1#2#3#4% 
        \fi     	    
	  \fi}
\newif\ifGreekBold  \GreekBoldfalse
\let\SAVEPBF=\pbf
\def\pbf{\GreekBoldtrue\SAVEPBF}%
  \newcounter{equationnumber}  
  \def\mathletters{%
     \addtocounter{equation}{1}
     \edef\@currentlabel{\theequation}%
     \setcounter{equationnumber}{\c@equation}
     \setcounter{equation}{0}%
     \edef\theequation{\@currentlabel\noexpand\alph{equation}}%
  }
    \def\BibTeX{{\rm B\kern-.05em{\sc i\kern-.025em b}\kern-.08em
                 T\kern-.1667em\lower.7ex\hbox{E}\kern-.125emX}}}{}%
\def\AmS{{\protect\usefont{OMS}{cmsy}{m}{n}%
                A\kern-.1667em\lower.5ex\hbox{M}\kern-.125emS}}}{}%
\def\@@eqncr{\let\@tempa\relax
    \ifcase\@eqcnt \def\@tempa{& & &}\or \def\@tempa{& &}%
      \else \def\@tempa{&}\fi
     \@tempa
     \if@eqnsw
        \iftag@
           \@taggnum
        \else
           \@eqnnum\stepcounter{equation}%
        \fi
     \fi
     \global\tag@false
     \global\@eqnswtrue
     \global\@eqcnt\z@\cr}
\def\TCItag{\@ifnextchar*{\@TCItagstar}{\@TCItag}}
\def\@TCItag#1{%
    \global\tag@true
    \global\def\@taggnum{(#1)}%
    \global\def\@currentlabel{#1}}
\def\@TCItagstar*#1{%
    \global\tag@true
    \global\def\@taggnum{#1}%
    \global\def\@currentlabel{#1}}
\def\tint{\msi@int\textstyle\int}%
\def\tiint{\msi@int\textstyle\iint}%
\def\tiiint{\msi@int\textstyle\iiint}%
\def\tiiiint{\msi@int\textstyle\iiiint}%
\def\tidotsint{\msi@int\textstyle\idotsint}%
\def\toint{\msi@int\textstyle\oint}%
\newtoks\temptoksa
\newtoks\temptoksb
\newtoks\temptoksc
\def\msi@int#1#2{%
 \def\@temp{{#1#2\the\temptoksc_{\the\temptoksa}^{\the\temptoksb}}}%   
 \futurelet\@nextcs
 \@int
}
\def\@int{%
   \ifx\@nextcs\limits
      \typeout{Found limits}%
      \temptoksc={\limits}%
	  \let\@next\@intgobble%
   \else\ifx\@nextcs\nolimits
      \typeout{Found nolimits}%
      \temptoksc={\nolimits}%
	  \let\@next\@intgobble%
   \else
      \typeout{Did not find limits or no limits}%
      \temptoksc={}%
      \let\@next\msi@limits%
   \fi\fi
   \@next   
}%
\def\@intgobble#1{%
   \typeout{arg is #1}%
   \msi@limits
}
\def\msi@limits{%
   \temptoksa={}%
   \temptoksb={}%
   \@ifnextchar_{\@limitsa}{\@limitsb}%
}
\def\@limitsa_#1{%
   \temptoksa={#1}%
   \@ifnextchar^{\@limitsc}{\@temp}%
}
\def\@limitsb{%
   \@ifnextchar^{\@limitsc}{\@temp}%
}
\def\@limitsc^#1{%
   \temptoksb={#1}%
   \@ifnextchar_{\@limitsd}{\@temp}%   
}
\def\@limitsd_#1{%
   \temptoksa={#1}%
   \@temp
}
\def\dint{\msi@int\displaystyle\int}%
\def\diint{\msi@int\displaystyle\iint}%
\def\diiint{\msi@int\displaystyle\iiint}%
\def\diiiint{\msi@int\displaystyle\iiiint}%
\def\didotsint{\msi@int\displaystyle\idotsint}%
\def\doint{\msi@int\displaystyle\oint}%
\def\ExitTCILatex{\makeatother }
\if@compatibility\message{amsmath already loaded}\fi\aftergroup\ExitTCILatex}
\if@compatibility\message{amstex already loaded}\fi\aftergroup\ExitTCILatex}
\if@compatibility\message{amsgen already loaded}\fi\aftergroup\ExitTCILatex}
\let\DOTSI\relax
\def\RIfM@{\relax\ifmmode}%
\def\FN@{\futurelet\next}%
\def\iint{\DOTSI\intno@\tw@\FN@\ints@}%
\def\iiint{\DOTSI\intno@\thr@@\FN@\ints@}%
\def\iiiint{\DOTSI\intno@4 \FN@\ints@}%
\def\idotsint{\DOTSI\intno@\z@\FN@\ints@}%
\def\ints@{\findlimits@\ints@@}%
\newif\iflimtoken@
\newif\iflimits@
\def\findlimits@{\limtoken@true\ifx\next\limits\limits@true
 \else\ifx\next\nolimits\limits@false\else
 \limtoken@false\ifx\ilimits@\nolimits\limits@false\else
 \ifinner\limits@false\else\limits@true\fi\fi\fi\fi}%
\def\multint@{\int\ifnum\intno@=\z@\intdots@                          %1
 \else\intkern@\fi                                                    %2
 \ifnum\intno@>\tw@\int\intkern@\fi                                   %3
 \ifnum\intno@>\thr@@\int\intkern@\fi                                 %4
 \int}%                                                               %5
\def\multintlimits@{\intop\ifnum\intno@=\z@\intdots@\else\intkern@\fi
 \ifnum\intno@>\tw@\intop\intkern@\fi
 \ifnum\intno@>\thr@@\intop\intkern@\fi\intop}%
\def\intic@{%
    \mathchoice{\hskip.5em}{\hskip.4em}{\hskip.4em}{\hskip.4em}}%
\def\negintic@{\mathchoice
 {\hskip-.5em}{\hskip-.4em}{\hskip-.4em}{\hskip-.4em}}%
\def\ints@@{\iflimtoken@                                              %1
 \def\ints@@@{\iflimits@\negintic@
   \mathop{\intic@\multintlimits@}\limits                             %2
  \else\multint@\nolimits\fi                                          %3
  \eat@}%                                                             %4
 \else                                                                %5
 \def\ints@@@{\iflimits@\negintic@
  \mathop{\intic@\multintlimits@}\limits\else
  \multint@\nolimits\fi}\fi\ints@@@}%
\def\intkern@{\mathchoice{\!\!\!}{\!\!}{\!\!}{\!\!}}%
\def\plaincdots@{\mathinner{\cdotp\cdotp\cdotp}}%
\def\intdots@{\mathchoice{\plaincdots@}%
 {{\cdotp}\mkern1.5mu{\cdotp}\mkern1.5mu{\cdotp}}%
 {{\cdotp}\mkern1mu{\cdotp}\mkern1mu{\cdotp}}%
 {{\cdotp}\mkern1mu{\cdotp}\mkern1mu{\cdotp}}}%
\def\RIfM@{\relax\protect\ifmmode}
\def\text{\RIfM@\expandafter\text@\else\expandafter\mbox\fi}
\let\nfss@text\text
\def\text@#1{\mathchoice
   {\textdef@\displaystyle\f@size{#1}}%
   {\textdef@\textstyle\tf@size{\firstchoice@false #1}}%
   {\textdef@\textstyle\sf@size{\firstchoice@false #1}}%
   {\textdef@\textstyle \ssf@size{\firstchoice@false #1}}%
   \glb@settings}
\def\textdef@#1#2#3{\hbox{{%
                    \everymath{#1}%
                    \let\f@size#2\selectfont
                    #3}}}
\newif\iffirstchoice@
\def\Let@{\relax\iffalse{\fi\let\\=\cr\iffalse}\fi}%
\def\vspace@{\def\vspace##1{\crcr\noalign{\vskip##1\relax}}}%
\def\multilimits@{\bgroup\vspace@\Let@
 \baselineskip\fontdimen10 \scriptfont\tw@
 \advance\baselineskip\fontdimen12 \scriptfont\tw@
 \lineskip\thr@@\fontdimen8 \scriptfont\thr@@
 \lineskiplimit\lineskip
 \vbox\bgroup\ialign\bgroup\hfil$\m@th\scriptstyle{##}$\hfil\crcr}%
\def\Sb{_\multilimits@}%
\def\endSb{\crcr\egroup\egroup\egroup}%
\def\Sp{^\multilimits@}%
\newdimen\ex@
\def\rightarrowfill@#1{$#1\m@th\mathord-\mkern-6mu\cleaders
 \hbox{$#1\mkern-2mu\mathord-\mkern-2mu$}\hfill
 \mkern-6mu\mathord\rightarrow$}%
\def\leftarrowfill@#1{$#1\m@th\mathord\leftarrow\mkern-6mu\cleaders
 \hbox{$#1\mkern-2mu\mathord-\mkern-2mu$}\hfill\mkern-6mu\mathord-$}%
\def\leftrightarrowfill@#1{$#1\m@th\mathord\leftarrow
\mkern-6mu\cleaders
 \hbox{$#1\mkern-2mu\mathord-\mkern-2mu$}\hfill
 \mkern-6mu\mathord\rightarrow$}%
\def\overrightarrow{\mathpalette\overrightarrow@}%
\def\overrightarrow@#1#2{\vbox{\ialign{##\crcr\rightarrowfill@#1\crcr
 \noalign{\kern-\ex@\nointerlineskip}$\m@th\hfil#1#2\hfil$\crcr}}}%
\def\overleftarrow{\mathpalette\overleftarrow@}%
\def\overleftarrow@#1#2{\vbox{\ialign{##\crcr\leftarrowfill@#1\crcr
 \noalign{\kern-\ex@\nointerlineskip}$\m@th\hfil#1#2\hfil$\crcr}}}%
\def\overleftrightarrow{\mathpalette\overleftrightarrow@}%
\def\overleftrightarrow@#1#2{\vbox{\ialign{##\crcr
   \leftrightarrowfill@#1\crcr
 \noalign{\kern-\ex@\nointerlineskip}$\m@th\hfil#1#2\hfil$\crcr}}}%
\def\underrightarrow{\mathpalette\underrightarrow@}%
\def\underrightarrow@#1#2{\vtop{\ialign{##\crcr$\m@th\hfil#1#2\hfil
  $\crcr\noalign{\nointerlineskip}\rightarrowfill@#1\crcr}}}%
\def\underleftarrow{\mathpalette\underleftarrow@}%
\def\underleftarrow@#1#2{\vtop{\ialign{##\crcr$\m@th\hfil#1#2\hfil
  $\crcr\noalign{\nointerlineskip}\leftarrowfill@#1\crcr}}}%
\def\underleftrightarrow{\mathpalette\underleftrightarrow@}%
\def\underleftrightarrow@#1#2{\vtop{\ialign{##\crcr$\m@th
  \hfil#1#2\hfil$\crcr
 \noalign{\nointerlineskip}\leftrightarrowfill@#1\crcr}}}%
\def\qopnamewl@#1{\mathop{\operator@font#1}\nlimits@}
\let\nlimits@\displaylimits
\def\setboxz@h{\setbox\z@\hbox}
\def\varlim@#1#2{\mathop{\vtop{\ialign{##\crcr
 \hfil$#1\m@th\operator@font lim$\hfil\crcr
 \noalign{\nointerlineskip}#2#1\crcr
 \noalign{\nointerlineskip\kern-\ex@}\crcr}}}}
 \def\rightarrowfill@#1{\m@th\setboxz@h{$#1-$}\ht\z@\z@
  $#1\copy\z@\mkern-6mu\cleaders
  \hbox{$#1\mkern-2mu\box\z@\mkern-2mu$}\hfill
  \mkern-6mu\mathord\rightarrow$}
\def\leftarrowfill@#1{\m@th\setboxz@h{$#1-$}\ht\z@\z@
  $#1\mathord\leftarrow\mkern-6mu\cleaders
  \hbox{$#1\mkern-2mu\copy\z@\mkern-2mu$}\hfill
  \mkern-6mu\box\z@$}
\def\projlim{\qopnamewl@{proj\,lim}}
\def\injlim{\qopnamewl@{inj\,lim}}
\def\varinjlim{\mathpalette\varlim@\rightarrowfill@}
\def\varprojlim{\mathpalette\varlim@\leftarrowfill@}
\def\varliminf{\mathpalette\varliminf@{}}
\def\varliminf@#1{\mathop{\underline{\vrule\@depth.2\ex@\@width\z@
   \hbox{$#1\m@th\operator@font lim$}}}}
\def\varlimsup{\mathpalette\varlimsup@{}}
\def\varlimsup@#1{\mathop{\overline
  {\hbox{$#1\m@th\operator@font lim$}}}}
\def\align{\@verbatim \frenchspacing\@vobeyspaces \@alignverbatim
You are using the "align" environment in a style in which it is not defined.}
\let\csname endalign*\endcsname =\endtrivlist
\def\alignat{\@verbatim \frenchspacing\@vobeyspaces \@alignatverbatim
You are using the "alignat" environment in a style in which it is not defined.}
\let\csname endalignat*\endcsname =\endtrivlist
\def\xalignat{\@verbatim \frenchspacing\@vobeyspaces \@xalignatverbatim
You are using the "xalignat" environment in a style in which it is not defined.}
\let\csname endxalignat*\endcsname =\endtrivlist
\def\gather{\@verbatim \frenchspacing\@vobeyspaces \@gatherverbatim
You are using the "gather" environment in a style in which it is not defined.}
\let\csname endgather*\endcsname =\endtrivlist
\def\multiline{\@verbatim \frenchspacing\@vobeyspaces \@multilineverbatim
You are using the "multiline" environment in a style in which it is not defined.}
\let\csname endmultiline*\endcsname =\endtrivlist
\def\arrax{\@verbatim \frenchspacing\@vobeyspaces \@arraxverbatim
You are using a type of "array" construct that is only allowed in AmS-LaTeX.}
\def\tabulax{\@verbatim \frenchspacing\@vobeyspaces \@tabulaxverbatim
You are using a type of "tabular" construct that is only allowed in AmS-LaTeX.}
\let\csname endarrax*\endcsname =\endtrivlist
\let\csname endtabulax*\endcsname =\endtrivlist
 \def\endequation{%
     \ifmmode\ifinner % FLEQN hack
      \iftag@
        \addtocounter{equation}{-1} % undo the increment made in the begin part
        $\hfil
           \displaywidth\linewidth\@taggnum\egroup \endtrivlist
        \global\tag@false
        \global\@ignoretrue   
      \else
        $\hfil
           \displaywidth\linewidth\@eqnnum\egroup \endtrivlist
        \global\tag@false
        \global\@ignoretrue 
      \fi
     \else   
      \iftag@
        \addtocounter{equation}{-1} % undo the increment made in the begin part
        \eqno \hbox{\@taggnum}
        \global\tag@false%
        $$\global\@ignoretrue
      \else
        \eqno \hbox{\@eqnnum}% $$ BRACE MATCHING HACK
        $$\global\@ignoretrue
      \fi
     \fi\fi
 } 
 \newif\iftag@ \tag@false
 \def\TCItag{\@ifnextchar*{\@TCItagstar}{\@TCItag}}
 \def\@TCItag#1{%
     \global\tag@true
     \global\def\@taggnum{(#1)}%
     \global\def\@currentlabel{#1}}
 \def\@TCItagstar*#1{%
     \global\tag@true
     \global\def\@taggnum{#1}%
     \global\def\@currentlabel{#1}}
     \def\tag{\@ifnextchar*{\@tagstar}{\@tag}}
     \def\@tag#1{%
         \global\tag@true
         \global\def\@taggnum{(#1)}}
     \def\@tagstar*#1{%
         \global\tag@true
         \global\def\@taggnum{#1}}
\title{\LARGE \bf
	Fully Distributed Nash Equilibrium Seeking for High-order Players with Bounded Controls and Directed Graphs}
\author{Maojiao Ye, Lei Ding,  Shengyuan Xu
	\thanks{M. Ye and S. Xu are with the School of Automation, Nanjing University of Science and Technology, Nanjing 210094, P.R. China (Email: ye0003ao@e.ntu.edu.sg; syxu@njust.edu.cn); L. Ding is with Institute of Advanced Technology, Nanjing University of Posts and Telecommunications (Email: dl522@163.com).}
	\thanks{This work is supported by the National Natural Science Foundation of China (NSFC), No. 61803202, 62073171, the Natural Science Foundation of
		Jiangsu Province, No. BK20180455, BK20200744, and the Fundamental Research Funds for the Central Universities, No. 30920032203.}
}
\begin{document}
	
	\maketitle
	\thispagestyle{empty}
	\pagestyle{empty}
	
	\begin{abstract}
		This paper explores  distributed Nash equilibrium seeking problems for games in which 
		the players have limited knowledge on  other players' actions. In particular, the involved players are considered to be high-order integrators with their control inputs constrained within a pre-specified region.
	 A linear transformation for the players' dynamics is firstly utilized to facilitate the design of bounded control inputs incorporating multiple saturation functions.
%		 A linear transformation is firstly utilized to convert the players' dynamics, based on which the control inputs are designed with multiple saturation functions utilized to ensure the boundedness of the control inputs.
		 By introducing consensus protocols with adaptive and time-varying gains, the unknown actions for players are distributively estimated. Then, a fully distributed Nash equilibrium seeking strategy is exploited, showcasing its remarkable properties:  i) ensuring the boundedness of control inputs; ii) avoiding any global information/parameters; and iii) allowing the graph to be directed. 
		Based on Lyapunov stability analysis, it is theoretically proved that the proposed distributed control strategy can lead all the players' actions to the Nash equilibrium. Finally, an illustrative example is given to validate effectiveness of the proposed method.
	\end{abstract}

	\begin{keywords}
		Nash equilibrium; actuator limitation; directed networks; games.
	\end{keywords}

%%%%%%%%%%%%%%%%%%%%%%%%%%%%%%%%%%%%%%%%%%%%%%%%%%%%%%%%%%%%%%%%%%%%%%%%%%%%%%%%
\section{Introduction}

As a fundamental and key issue to be addressed for game theoretical applications to large-scale distributed systems, Nash equilibrium seeking in neighboring-communication environment has attracted remarkable attention in the past several years from researchers in the control community \cite{YETAC2017}--\cite{SalehisadaghianiAuto16}. For  practical control engineering problems, communication structure (i.e., undirected or directed), system dynamics  and actuator limitations are all critical factors that may seriously influence control design and implementation.  In this regard, to promote the penetration of game theoretical approaches for distributed control systems, it is essential to develop distributed Nash equilibrium seeking strategies taking these factors into full consideration. In spirit of broadening the applicable fields of distributed games, some efforts have been made to deal with high-order players, e.g., see  \cite{ZhengCCC2020, LiuICCA2020, RomanoTCSN20}.
 However, for games with high-order players, there have been few works regarding actuator limitations and fully distributed designs under directed graphs.

It is well recognized that,  due to
hard physical constraints, it is inevitable for players to suffer from the limitation of control inputs/actuation in practical distributed game applications, which probably causes degradation or even damage of control performance. In order to address this issue, \cite{YETAC2021} constructed bounded controls for first- and second-order integrator-type systems to find the Nash equilibrium. Moreover, high-order players were considered in \cite{AiIJRNC2021}. Backstepping techniques were employed for the control design and the ``explosion of terms" induced by backstepping was addressed through a fixed-time sliding mode observer. However, the seeking strategies \cite{YETAC2021,AiIJRNC2021} contain centralized control gains whose explicit quantifications require the knowledge on the network structure, the size of the game as well as the players' objective functions. 

As centralized information can hardly be obtained by every engaged player  in practical situations, the tuning of control gains is in fact a matter of trial and error. In particular, when communication structures change or there is any player joining/leaving the game \cite{YEAUTO2021}, the control gains for the designed strategies may need to be re-quantified, which implies the loss of plug and play property. To address this problem,   \cite{YEAUTO2021,Persis2019,Bianchi2021} found some ways out by proposing adaptive designs for control gains and thus developed fully distributed control laws for games in neighboring communication environment. Different from the two-hop communication based algorithms constructed in \cite{Persis2019,Bianchi2021}, only one-hop communication is needed in \cite{YEAUTO2021}. However, it should be pointed out that the adaptive designs in \cite{YEAUTO2021,Persis2019,Bianchi2021} is only applicable for undirected graphs and cannot be directly extended to deal with directed graphs.  To the best of the authors' knowledge, {how to achieve fully distributed Nash equilibrium seeking under directed graphs is still an open and challenging  issue}. Furthermore, it is noted that, practical actuator limitations introduce high nonlinearity and bring some difficulties in distributed control design for games, but they are not taken into account  in \cite{YEAUTO2021,Persis2019,Bianchi2021}. Therefore, it is a non-trivial and challenging task  to establish Nash equilibrium seeking strategies under  bounded controls in a fully distributed manner, especially when  communication topologies are directed. 

Motivated by the above observations, this paper aims to develop fully distributed control laws for high-order players, which can accommodate actuator  limitations and directed communication structures. Highlighting the improvements for the existing works, the contributions and novelties of this paper are stated as follows. 
\begin{enumerate}[i)]
	\item   This paper solves games with high-order players whose control inputs are required to bounded in a fully distributed fashion. By employing a linear transformation to convert the players' dynamics, the control inputs with  multiple saturation functions are constructed.  Through a synthesis of an optimization method, a consensus algorithm and time-varying/adaptive gain designs, a fully distributed Nash equilibrium seeking strategy with bounded control inputs is established.  
	\item As first- and second-order dynamics are special cases of high-order ones, the presented methods  provide alternative approaches for the problem considered in \cite{YETAC2021}, while covering more general cases and removing the requirements on any global information.   In addition, the presented methods can accommodate the heterogeneity on the system order and require less computation expenditure than that of \cite{AiIJRNC2021}, especially when the order of the system is high.
	\item   The proposed strategy is fully distributed in the sense that no centralized control gains are involved and no knowledge on any global information is required for  the players. In particular, compared with the adaptive designs under undirected graphs in \cite{YEAUTO2021,Persis2019,Bianchi2021}, the proposed strategy allows the graph to be directed and only requires one-hop communication, which is preferable for distributed systems.
	\item   The proposed methods are analytically studied and it is theoretically proven that the Nash equilibrium is globally asymptotically stable under the proposed methods.  Several special cases are discussed to provide more insights on the connections with the existing works.   
\end{enumerate}

%The remaining parts of the paper are given in the following order. Section \ref{NP} presents the formulated problem, and Section 
%\ref{main} proposes the control laws to achieve the goal of the paper. The context in Section \ref{dis_cu} focuses on some discussions on the presented results. Numerical verifications are given in Section \ref{num} and conclusions are presented in Section \ref{concl}.

\section{Problem Statement}\label{NP}
This paper considers a network of high-order integrator-type players with labels from $1$ to $N,$ successively, where $N>1$ is an integer. The state of player $i$, denoted as $x_i\in\mathbb{R}^{m_i}$, is generated by 
\begin{align}\label{eq_integra}
	\dot{x}_i=A_ix_i+B_iu_i,~~~~y_i=C_ix_i,\nonumber
\end{align}
where $A_i=\begin{bmatrix}
\mathbf{0}_{m_i-1} &I_{m_i-1}\\
0 &\mathbf{0}_{m_i-1}^T
\end{bmatrix}\in\mathbb{R}^{m_i\times m_i}$, $B_i=\begin{bmatrix}
0&\cdots& 0&1
\end{bmatrix}^T\in\mathbb{R}^{m_i\times 1},$ $C_i=\begin{bmatrix}
1& 0& \cdots& 0 &0
\end{bmatrix}\in\mathbb{R}^{1\times m_i}$ and $m_i>1$ is a positive integer.  Moreover, $u_i\in\mathbb{R}$ and $y_i\in\mathbb{R}$ are the control input and output/action of player $i$, respectively.  Assume that each player has a local objective function defined as $f_i(\mathbf{y})=f_i(y_i,\mathbf{y}_{-i}),$ where $\mathbf{y}=[y_1,y_2,\cdots,y_N]^T,$ $\mathbf{y}_{-i}=[y_1,\cdots,y_{i-1},y_{i+1},\cdots,y_N]^T,$ and each player aims at minimizing  $f_i(y_i,\mathbf{y}_{-i})$ through adjusting its own action $y_i$, i.e.,
\begin{align}
	\text{min}_{y_i} \ \ \  f_i(y_i,\mathbf{y}_{-i}). 
\end{align}

Suppose that each player cannot directly access all other players' actions and
\begin{align}\label{bound}
	|u_i|\leq U_i
\end{align}
where $U_i$ is a positive constant denoting the actuator limitation of player $i$. 

The paper aims to design \textbf{fully} distributed $u_i$ that satisfies \eqref{bound} to drive the players' actions $\mathbf{y}$ to the Nash equilibrium $\mathbf{y}^*$, whose definition is given below.  
\begin{Definition}
An action profile $\mathbf{y}^*=(y_i^*,\mathbf{y}_{-i}^*)$ is a Nash equilibrium if
for all $y_i\in\mathbb{R},i\in\mathcal{V},$
\begin{equation}
f_i(y_i^*,\mathbf{y}_{-i}^*)\leq f_i(y_i,\mathbf{y}_{-i}^*),
\end{equation}
where $\mathcal{V}$ is the player set given as $\mathcal{V}=\{1,2,\cdots,N\}.$
\end{Definition}
\begin{Remark}
	It is worth mentioning that in the paper $x_i\in \mathbb{R}^{m_i}$, where $m_i$ for $i\in\mathcal{V}$ can be different from each other, indicating that the \textit{heterogeneity} on the order of the players' dynamics is allowed.
\end{Remark}

For notational clarity, define $[\chi_i]_{vec}$   as a column vector whose $i$th entry is $\chi_i$. Moreover, let $[\chi_{ij}]_{vec}$ ($\text{diag}\{\chi_{ij}\}$) for $i,j\in\mathcal{V}$ be a column vector (diagonal matrix) whose entries are $\chi_{11},\chi_{12},\cdots,\chi_{1N},\chi_{21},\cdots,\chi_{NN}$, respectively. In addition, $[\chi_{ij}]$ is a matrix whose $(i,j)$th entry is $\chi_{ij}.$

The remaining sections are based on the assumptions below. 

\begin{Assumption}\label{ass1}
The players' objective functions $f_i(\mathbf{y})$ for $i\in\mathcal{V}$ are continuously differentiable with their gradients $\nabla_i f_i(\mathbf{y})$ being globally Lipschitz, i.e., for $\mathbf{y},\mathbf{z}\in\mathbb{R}^{N},$
\begin{equation}
\left|\left|\nabla_i f_i(\mathbf{y})-\nabla_i f_i(\mathbf{z})\right|\right|\leq l_i||\mathbf{y}-\mathbf{z}||, \forall i\in\mathcal{V},
\end{equation}
for some positive constant $l_i,$ where $\nabla_i f_i(\mathbf{y})=\frac{\partial f_i(\mathbf{y})}{\partial y_i}$ and  
$\nabla_i f_i(\mathbf{z})=\frac{\partial f_i(\mathbf{y})}{\partial y_i}\left.\right|_{\mathbf{y}=\mathbf{z}}.$
\end{Assumption}

\begin{Assumption}\label{ass2}
For  $\mathbf{y},\mathbf{z}\in\mathbb{R}^{N},$ 
\begin{equation}
(\mathbf{y}-\mathbf{z})^T([\nabla_i f_i(\mathbf{y})]_{vec}-[\nabla_i f_i(\mathbf{z})]_{vec})\geq \omega||\mathbf{y}-\mathbf{z}||^2,
\end{equation}
for some positive constant $\omega.$ 
\end{Assumption}

To design fully distributed control laws, it is assumed that there is a directed communication graph among the players described by $\mathcal{G}(\mathcal{V}, \mathcal{E})$, where $\mathcal{V}$ and $\mathcal{E}\subseteq\mathcal{V}\times\mathcal{V}$ stand for  the node set and edge set, respectively. Let $(i, j)\in \mathcal{E}$ and $a_{ji}$ be an edge from node $i$ to $j$ and its weight, respectively. If $(i, j)\in \mathcal{E}$, $a_{ji}>0$, otherwise, $a_{ji}=0$. In this paper,  $a_{ii}=0$. 
A directed path is defined as a sequence of edges of the form $(i_1,i_2),$ $(i_2,i_3),\cdots.$ A directed graph is strongly connected if for every pair of distinct nodes, there is a path. 
Define $\mathcal{A}=[a_{ij}]$ as the adjacency matrix of $\mathcal{G}$. Then, $\mathcal{L}=\mathcal{D}-\mathcal{A}$, where $\mathcal{D}=\text{diag}\{\sum_{j=1}^{N}a_{ij}\},$ is the Laplacian matrix of $\mathcal{G}$ \cite{QinTAC}--\cite{ShaoTAC20}. 

\begin{Assumption}\label{ass3}
	The directed graph $\mathcal{G}$ is strongly connected.
\end{Assumption}

\begin{Remark}
Assumptions \ref{ass1}-\ref{ass3} are commonly adopted and mild conditions (see, e.g., \cite{YETAC2017},\cite{wangauto2020},\cite{YETAC183} and many other references therein). Assumption \ref{ass2} is employed to characterize a unique Nash equilibrium, which is globally exponentially stable under the gradient play for games with globally Lipschitz gradients (Assumption \ref{ass1}) \cite{YETAC2017}. While existing fully distributed Nash equilibrium seeking schemes \cite{YEAUTO2021},\cite{Persis2019},\cite{Bianchi2021} are established for undirected communication topologies, Assumption \ref{ass3} suggests that asymmetric information exchange  among the players is sufficient for the developed methods.   
\end{Remark}

\section{Main Results}\label{main}
This section develops a fully distributed Nash equilibrium seeking strategy for the considered problem, under which the associated convergence analysis is provided. 
\subsection{Strategy Design}

To deal with the players' dynamics, a transformation is firstly conducted by defining $x_i=T_i\bar{x}_i$ to convert \eqref{eq_integra} to
\begin{align}\label{eq_transf}
	\dot{\bar{x}}_i=\bar{A}_i\bar{x}_i+\bar{B}_iu_i,
\end{align}
in which $\bar{A}_i=\left[\begin{smallmatrix}
0& \theta_i^{m_i-1}& \theta_i^{m_i-2}& \cdots&\theta_i\\
0 &0 & \theta_i^{m_i-2}&\cdots&\theta_i\\
0&0 & 0&\cdots&\theta_i\\
\vdots&\vdots &\vdots &\ddots& \vdots\\
0 &0 &0 &\cdots &0
\end{smallmatrix}\right]$ and $\bar{B}_i=\begin{bmatrix}
1 &
1&
\cdots&
1
\end{bmatrix}^T,$
 $T_i=\mathbf{R}(A_i,B_i)\mathbf{R}(\bar{A}_i,\bar{B}_i)^{-1}$ is a non-singular matrix , $\mathbf{R}(A,B)$ denotes the controllability matrix of $(A,B)$ and $\theta_i\in(0,1)$ is a constant to be further determined \cite{SussmannTAC94}.  
Based on the above transformation, the fully distributed bounded control input $u_i$ is designed as 
\begin{align}\label{eq1}
	u_i=&-\sum\nolimits_{k=1}^{m_i-1}\theta_i^k \phi_i(\bar{x}_{i(m_i-k+1)})\nonumber\\
	&-\theta_i^{m_i}\phi_i(\bar{x}_{i1}+\prod_{k=1}^{m_i-1}\theta_i^k\int_{0}^{t}\nabla_{i}f_i(\mathbf{z}_i(\tau))d\tau),
	\end{align}
in which  $\mathbf{z}_i=[z_{i1},z_{i2},\cdots,z_{iN}]^T$ and 
\begin{align}\label{ada}
	\dot{z}_{ij}=&-(c_{ij}+\rho_{ij})(\sum\nolimits_{k=1}^N a_{ik}(z_{ij}-z_{kj})\nonumber\\
	&+a_{ij}(z_{ij}+\int_{0}^{t}\nabla_{j}f_j(\mathbf{z}_j(\tau))d\tau)),\nonumber\\
	\dot{c}_{ij}=&\rho_{ij},
\end{align}
 $\rho_{ij}=(\sum_{k=1}^N a_{ik}(z_{ij}-z_{kj})+a_{ij}(z_{ij}+\int_{0}^{t}\nabla_{j}f_j(\mathbf{z}_j(\tau))d\tau))^2,$
  and  
  $c_{ij}(0)>0.$ 
Moreover, $\phi_i(\cdot)$ is a saturation function defined as $\phi_i(\varsigma)=sign(\varsigma)\min\{|\varsigma|,\Delta_i\},$ where $\Delta_i$ is a positive constant that can be adjusted according to the actuator limitation. 

\begin{Remark}
	The saturation function utilized in the control design ensures the boundedness of the control inputs. More specifically, given any positive constant $U_i,$ one can choose $\Delta_i$ such that 
	\begin{align}
		\frac{\theta_i}{1-\theta_i}\Delta_i<U_i,
	\end{align}
	to ensure that $|u_i|\leq U_i$.  
\end{Remark}
\begin{Remark}
The adaptive design, inspired by \cite{WangTCSII29}, ensures that $\rho_{ij}(t)$ is non-negative and  $c_{ij}(t)$ is monotonically increasing as $\dot{c}_{ij}(t)$ is non-negative. Moreover, $\theta_i$ can be determined in a decentralized fashion. Therefore, all the control gains are independent of any global information. In addition, the update of the auxiliary variables $z_{ij}$ depends only on local information exchange. Hence, the control input in \eqref{eq1}-\eqref{ada} is \textit{fully} distributed. Note that as the communication graph is directed in this paper, the adaptive designs in \cite{YEAUTO2021,Persis2019,Bianchi2021} cannot be applied.
\end{Remark}
\begin{Remark}
It is worth mentioning that the linear transformation is not unique. For example,  one can also choose a non-singular matrix $T_i$ to convert the players' dynamics to 
\begin{align}\label{eq_transf11}
\dot{\hat{x}}_i=\hat{A}_i\hat{x}_i+\hat{B}_iu_i,
\end{align}
in which $\hat{A}_i=\left[\begin{matrix}
0& \theta_i& \theta_i& \cdots&\theta_i\\
0 &0 & \theta_i&\cdots&\theta_i\\
0&0 & 0&\cdots&\theta_i\\
\vdots&\vdots &\vdots &\ddots& \vdots\\
0 &0 &0 &\cdots &0
\end{matrix}\right]$ and $\hat{B}_i=\begin{bmatrix}
1 &
1&
\cdots&
1
\end{bmatrix}^T.$
In the case, $u_i$ can be designed as 
\begin{align}\label{eqzz1}
u_i=&-\sum\nolimits_{k=1}^{m_i-1}\theta_i \phi_i(\hat{x}_{i(m_i-k+1)})\nonumber\\
&-\theta_i\phi_i(\hat{x}_{i1}+\theta_i^{m_i-1}\int_{0}^{t}\nabla_{i}f_i(\mathbf{z}_i(\tau))d\tau),
\end{align}
for which the convergence analysis follows that of \eqref{eq1}-\eqref{ada}.
\end{Remark}

\subsection{Convergence Analysis}

In this section, the method in  \eqref{eq1}-\eqref{ada} is analytically investigated. 
Before proceeding to the convergence analysis, the following supportive lemmas are given. 

\begin{Lemma}\label{Lemma1}
For each $\theta_i\in (0,\frac{1}{2})$, there exists a constant $T(\theta_i)\geq 0$ such that for all $i\in\mathcal{V},$
	\begin{equation}
		|\bar{x}_{ik}(t)|\leq \Delta_i, \forall t\geq T,k\in\{2,\cdots,m_i\}.
	\end{equation}
\end{Lemma}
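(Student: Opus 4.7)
The plan is to proceed by backward induction on the index $k$, establishing the bound $|\bar{x}_{ik}(t)|\leq \Delta_i$ first for $k=m_i$ and then for $k=m_i-1,\ldots,2$. The central structural observation is that the nested-saturation control in \eqref{eq1} is designed so that the coefficient of the saturation acting on $\bar{x}_{ik}$ matches exactly the coefficient $\theta_i^{m_i-k+1}$ of the linear coupling term $\bar{x}_{ik}$ in the upper-triangular matrix $\bar{A}_i$. Consequently, once the ``downstream'' components $\bar{x}_{i(k+1)},\ldots,\bar{x}_{im_i}$ have all entered the box $[-\Delta_i,\Delta_i]$, the saturation $\phi_i(\cdot)$ acts as the identity on them, and the coupling terms in $\dot{\bar{x}}_{i(m_i-l)}$ cancel pairwise with the corresponding terms of $u_i$.

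I would first handle the base case $k=m_i$, for which $\dot{\bar{x}}_{im_i}=u_i$. When $|\bar{x}_{im_i}|>\Delta_i$, the ``dominant'' term $-\theta_i\,\mathrm{sign}(\bar{x}_{im_i})\Delta_i$ in $u_i$ has magnitude $\theta_i\Delta_i$, while the remaining saturated terms have total magnitude at most $\sum_{k=2}^{m_i}\theta_i^k\Delta_i\leq \theta_i^{2}\Delta_i/(1-\theta_i)$. The hypothesis $\theta_i<1/2$ is exactly what is needed to make $\theta_i^2/(1-\theta_i)<\theta_i$, yielding $\dot{\bar{x}}_{im_i}(t)\,\bar{x}_{im_i}(t)<0$ with a uniform negative rate bounded away from zero. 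Hence $\bar{x}_{im_i}$ enters $[-\Delta_i,\Delta_i]$ in some finite time $T_0(\theta_i)$, and the same sign calculation at the boundary shows the interval is forward invariant.

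For the inductive step, assume $|\bar{x}_{ik}(t)|\leq \Delta_i$ for all $t\geq T_{l-1}$ and for $k\in\{m_i-l+1,\ldots,m_i\}$. On the interval $t\geq T_{l-1}$, I substitute $\phi_i(\bar{x}_{ik})=\bar{x}_{ik}$ for these indices inside $u_i$ and in the cross-coupling term of $\dot{\bar{x}}_{i(m_i-l)}=\sum_{k=m_i-l+1}^{m_i}\theta_i^{m_i-k+1}\bar{x}_{ik}+u_i$. After the pairwise cancellation between the $k=1,\ldots,l$ terms of the sum in $u_i$ and the coupling terms, the dynamics reduce to
\begin{equation*}
\dot{\bar{x}}_{i(m_i-l)}=-\theta_i^{l+1}\phi_i(\bar{x}_{i(m_i-l)})+\Xi_{i,l}(t),
\end{equation*}
where $\Xi_{i,l}(t)$ collects the remaining saturated terms (those still involving $\phi_i(\bar{x}_{ik})$ for $k\leq m_i-l-1$ plus the last term of $u_i$) and therefore satisfies $|\Xi_{i,l}(t)|\leq \sum_{k=l+2}^{m_i}\theta_i^k\Delta_i\leq \theta_i^{l+2}\Delta_i/(1-\theta_i)$. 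The same inequality $\theta_i<1/2$ again forces the dominant saturation term to overpower the remainder whenever $|\bar{x}_{i(m_i-l)}|>\Delta_i$, so $\bar{x}_{i(m_i-l)}$ enters $[-\Delta_i,\Delta_i]$ in finite time $T_l(\theta_i)\geq T_{l-1}$ and stays there. Setting $T(\theta_i)=\max_{i,l}T_l(\theta_i)$ yields the claim.

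The main obstacle, and the point that requires the most care, is verifying the index bookkeeping for the cancellation: one must match each linear coupling coefficient $\theta_i^{m_i-k+1}$ in $\bar{A}_i$ with exactly the right summand $-\theta_i^{k'}\phi_i(\bar{x}_{i(m_i-k'+1)})$ in $u_i$ (via $k'=m_i-k+1$), and then confirm that the leftover saturated terms form the geometric tail whose sum is $\theta_i^{l+2}\Delta_i/(1-\theta_i)$. Once that algebraic skeleton is in place, the inequality $\theta_i<1/2$ mechanically closes the induction and produces a uniform finite entry time for all components $k\in\{2,\ldots,m_i\}$, noting that $\bar{x}_{i1}$ is deliberately excluded because its dynamics is driven by the integral term that feeds the gradient of $f_i$ and need not remain bounded by $\Delta_i$.
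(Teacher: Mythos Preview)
Your proposal is correct and follows essentially the same route as the paper: backward induction from $k=m_i$ down to $k=2$, using the cancellation between the linear coupling in $\bar{A}_i$ and the matching saturation terms in $u_i$ once the downstream components are in $[-\Delta_i,\Delta_i]$, together with the geometric-tail estimate that $\theta_i<1/2$ makes strict. The only cosmetic differences are that the paper packages each step via the Lyapunov function $V_{ik}=\int_0^{\bar{x}_{ik}}\phi_i(\tau)\,d\tau$ and Khalil's ISS theorems instead of your direct sign argument, and it explicitly remarks that $\bar{x}_{i(m_i-l)}$ cannot blow up on $[0,T_{l-1}]$ since $u_i$ and the already-treated components are bounded---a point you should also state.
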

\begin{Proof}
	See Section \ref{proof_Lemma1} for the proof.
\end{Proof}
\begin{Remark}
	Lemma \ref{Lemma1} demonstrates that  there exists a non-negative constant $T$ such that for $t\geq T,$ $|\bar{x}_{ik}(t)|$ for all  $k=2,\cdots,m_i-1,m_i$ can evolve into the unsaturated region, indicating that the effects of the saturation function on $|\bar{x}_{ik}(t)|$ for $k=2,\cdots,m_i-1,m_i$ vanish within finite time. Based on this conclusion, the stability analysis for the closed-loop system is largely simplified. 
\end{Remark}

Now, we focus on the evolution of $\bar{x}_{i1}(t)$ by considering a reduced system as
\begin{align}
\dot{\bar{x}}_{i1}=-\theta_i^{m_i}\phi_i(\bar{x}_{i1}+\prod_{k=1}^{m_i-1}\theta_i^k\int_{0}^{t}\nabla_{i}f_i(\mathbf{z}_i(\tau))d\tau)\label{auxi_2}.
\end{align}
 
Let $\tilde{x}_{i1}=\bar{x}_{i1}+\prod_{k=1}^{m_i-1}\theta_i^k\int_{0}^{t}\nabla_{i}f_i(\mathbf{z}_i(\tau))d\tau.$ Then,
\begin{align}\label{auxi_3}
\dot{\tilde{x}}_{i1}
%&=\dot{\bar{x}}_{i1}+\prod_{k=1}^{m_i-1}\theta_i^k\nabla_{i}f_i(\mathbf{z}_i(t))\nonumber\\
&=-\theta_i^{m_i}\phi_i(\tilde{x}_{i1})+\prod\nolimits_{k=1}^{m_i-1}\theta_i^k\nabla_{i}f_i(\mathbf{z}_i(t)).
\end{align}
Consequently, the subsequent result can be derived. 
\begin{Lemma}\label{lemma2}
	Suppose that $|\nabla_{i}f_i(\mathbf{z}_i(t))|\leq \nu_1$ for all $t>0$ and there is a constant $\tilde{T}\geq 0$ such that for all $t\geq\tilde{T},$
	$\Delta_i>\frac{2}{\theta_i^{m_i}}|\prod_{k=1}^{m_i-1}\theta_i^k\nabla_{i}f_i(\mathbf{z}_i(t))|.$
	Then, the trajectory $\tilde{x}_{i1}(t)$ generated by \eqref{auxi_3} stays bounded for all $t>0$ and there exists a $\beta\in \mathcal{KL}$ and a $\gamma\in \mathcal{K}$ such that for $t\geq\tilde{T},$
		\begin{align}
		|\tilde{x}_{i1}(t)|\leq & \beta(|\tilde{x}_{i1}(\tilde{T})|,t-\tilde{T})\nonumber+\gamma(\sup_{\tilde{T}<\tau<t} |\nabla_{i}f_i(\mathbf{z}_i(\tau))|).
		\end{align}
\end{Lemma}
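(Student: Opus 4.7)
The statement has two parts: boundedness of $\tilde{x}_{i1}(t)$ for all $t>0$, and an ISS-type bound valid after time $\tilde{T}$. I would treat the interval $[0,\tilde{T}]$ and the tail $[\tilde{T},\infty)$ separately, because the key decisive hypothesis $\Delta_i>\frac{2}{\theta_i^{m_i}}|\prod_{k=1}^{m_i-1}\theta_i^k\nabla_i f_i(\mathbf{z}_i(t))|$ is only guaranteed on the tail.

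On $[0,\tilde{T}]$ the right-hand side of \eqref{auxi_3} is uniformly bounded: the saturated term satisfies $|\theta_i^{m_i}\phi_i(\tilde{x}_{i1})|\le \theta_i^{m_i}\Delta_i$ by construction of $\phi_i$, and the forcing term is bounded by $\prod_{k=1}^{m_i-1}\theta_i^k\,\nu_1$ by the hypothesis $|\nabla_i f_i(\mathbf{z}_i(t))|\le\nu_1$. Since the vector field is globally bounded in magnitude and $\tilde{T}$ is finite, integrating yields $|\tilde{x}_{i1}(t)|\le|\tilde{x}_{i1}(0)|+M\tilde{T}$ for a constant $M$ depending only on $\theta_i$, $\Delta_i$ and $\nu_1$, so $\tilde{x}_{i1}$ stays bounded up to $\tilde{T}$.

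For $t\ge\tilde{T}$, I would run a standard ISS-Lyapunov argument with $V=\tfrac12\tilde{x}_{i1}^2$, computing
\[
\dot V=-\theta_i^{m_i}\tilde{x}_{i1}\phi_i(\tilde{x}_{i1})+\tilde{x}_{i1}\prod\nolimits_{k=1}^{m_i-1}\theta_i^k\nabla_i f_i(\mathbf{z}_i(t)),
\]
and splitting on whether $|\tilde{x}_{i1}|\le\Delta_i$ or $|\tilde{x}_{i1}|>\Delta_i$. In the unsaturated regime, $\tilde{x}_{i1}\phi_i(\tilde{x}_{i1})=\tilde{x}_{i1}^2$, and a Young-type inequality together with the hypothesis $\Delta_i>\frac{2}{\theta_i^{m_i}}|\prod_k\theta_i^k\nabla_i f_i|$ allows me to absorb the forcing term into a fraction of $-\theta_i^{m_i}\tilde{x}_{i1}^2$, leaving $\dot V\le-\tfrac12\theta_i^{m_i}\tilde{x}_{i1}^2$ whenever $|\tilde{x}_{i1}|\ge\tfrac{2}{\theta_i^{m_i}}\prod_k\theta_i^k|\nabla_i f_i(\mathbf{z}_i(t))|$. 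In the saturated regime, $\tilde{x}_{i1}\phi_i(\tilde{x}_{i1})=\Delta_i|\tilde{x}_{i1}|$ and the hypothesis directly forces
\[
\dot V\le -\theta_i^{m_i}\Delta_i|\tilde{x}_{i1}|+|\tilde{x}_{i1}|\prod\nolimits_{k=1}^{m_i-1}\theta_i^k|\nabla_i f_i|\le -\tfrac12\theta_i^{m_i}\Delta_i|\tilde{x}_{i1}|,
\]
which is strictly negative outside a ball whose radius is determined by $\sup_{\tau}|\nabla_i f_i(\mathbf{z}_i(\tau))|$ (via the threshold $\tfrac{2}{\theta_i^{m_i}}\prod_k\theta_i^k|\nabla_i f_i|$).

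Combining the two regimes yields an implication of the form ``$|\tilde{x}_{i1}|\ge\rho(|\nabla_i f_i|)\Rightarrow\dot V\le-\alpha(|\tilde{x}_{i1}|)$'' with $\rho\in\mathcal{K}$ and $\alpha$ positive definite, which is precisely an ISS-Lyapunov function for the $(\tilde{x}_{i1},\nabla_i f_i(\mathbf{z}_i))$ system in the sense of Sontag. Applying the standard ISS-Lyapunov theorem on the shifted time axis $[\tilde{T},\infty)$ produces the desired class-$\mathcal{KL}$ function $\beta$ and class-$\mathcal{K}$ function $\gamma$, completing the bound. The main technical delicacy is the non-smooth kink of $\phi_i$ at $|\tilde{x}_{i1}|=\Delta_i$: the estimates must be written so that both pieces of $\phi_i$ yield the same qualitative conclusion, and the boundary case $|\tilde{x}_{i1}|=\Delta_i$ is absorbed into either side by continuity; this is the step that requires the most care but is not deep once the two cases are set up side by side.
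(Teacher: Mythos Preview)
Your argument is correct and follows essentially the same route as the paper: bounded right-hand side gives no finite escape on $[0,\tilde T]$, then an ISS-Lyapunov argument on $[\tilde T,\infty)$ with a saturated/unsaturated case split and an appeal to the standard ISS theorem. The only noteworthy difference is the Lyapunov function: the paper uses $V_{i1}=\int_0^{\tilde x_{i1}}\phi_i(\tau)\,d\tau$, which makes $\dot V_{i1}=\phi_i(\tilde x_{i1})\dot{\tilde x}_{i1}$ and hence keeps the forcing term uniformly bounded by $\Delta_i\prod_k\theta_i^k|\nabla_i f_i|$, whereas your quadratic $V=\tfrac12\tilde x_{i1}^2$ produces an $|\tilde x_{i1}|$-weighted forcing term that you correctly dominate in the saturated regime via the hypothesis $\theta_i^{m_i}\Delta_i>2\prod_k\theta_i^k|\nabla_i f_i|$; both choices lead to the same positive-definite dissipation outside the ball of radius $\tfrac{2}{\theta_i^{m_i}}\prod_k\theta_i^k|\nabla_i f_i|$, so the difference is cosmetic.
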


\begin{Proof}
	See Section \ref{proof_lemma2} for the proof. 
\end{Proof}

	Lemma \ref{lemma2} demonstrates that with bounded $\nabla_{i}f_i(\mathbf{z}_i(t))$, the trajectory of $\tilde{x}_{i1}(t)$ will always stay bounded. In addition, if $|\nabla_{i}f_i(\mathbf{z}_i(t))|$ is decreasing to be sufficiently small and stays therein thereafter, $|\tilde{x}_{i1}(t)|$ will be upper-bounded by  $\beta(|\tilde{x}_{i1}(\tilde{T})|,t-\tilde{T})+\gamma(\sup_{\tilde{T}<\tau<t} |\nabla_{i}f_i(\mathbf{z}_i(\tau))|),$ indicating that if $|\nabla_{i}f_i(\mathbf{z}_i(t))|$ vanishes to zero as $t\rightarrow\infty$, 
	\begin{align}
		\lim_{t\rightarrow \infty} |\bar{x}_{i1}(t)+\int_{0}^{t}\prod_{k=1}^{m_i-1}\theta_i^k\nabla_{i}f_i(\mathbf{z}_i(\tau))d\tau|=0.
	\end{align}
To this end, one needs to further consider the evolution of $\nabla_{i}f_i(\mathbf{z}_i(t))$, which is investigated by considering the following auxiliary system,
\begin{align}
\dot{z}_{ij}=-(c_{ij}+\rho_{ij})\xi_{ij},~~~~~~\dot{c}_{ij}=\rho_{ij}.\label{auxi_1}
\end{align}
where $\rho_{ij}=\xi_{ij}^2$, $c_{ij}(0)>0,$ $\xi_{ij}=\sum_{k=1}^N a_{ik}(z_{ij}-z_{kj})+a_{ij}(z_{ij}+\int_{0}^{t}\nabla_{j}f_j(\mathbf{z}_j(\tau))d\tau).$ Let $\mathbf{\xi}=[\xi_{ij}]_{vec}$, $\mathbf{z}=[z_{ij}]_{vec}$, $H=\mathcal{L}\otimes I_{N}+A_0,$ $A_0=\text{diag}\{a_{ij}\}$, $c=\text{diag}\{c_{ij}\}$ and $\rho=\text{diag}\{\rho_{ij}\}$. Then, $\mathbf{\xi}=H(\mathbf{z}+\mathbf{1}_N\otimes [\int_{0}^{t}\nabla_{i}f_i(\mathbf{z}_i(\tau))d\tau]_{vec}),$
%\begin{align}
%\mathbf{\xi}=H(\mathbf{z}+\mathbf{1}_N\otimes [\int_{0}^{t}\nabla_{i}f_i(\mathbf{z}_i(\tau))d\tau]_{vec}),
%\end{align}
and $\dot{\mathbf{\xi}}=H(-(c+\rho)\mathbf{\xi}+\mathbf{1}_N\otimes \nabla_{i}f_i(\mathbf{z}_i(t))).$
%\begin{align}
%\dot{\mathbf{\xi}}=H(-(c+\rho)\mathbf{\xi}+\mathbf{1}_N\otimes \nabla_{i}f_i(\mathbf{z}_i(t))).
%\end{align}

The following result can be obtained. 

\begin{Lemma}\label{lemma3}
	Under Assumptions \ref{ass1}-\ref{ass3}, 
	\begin{align}
		&\lim_{t\rightarrow \infty}||-[\int_{0}^{t}\nabla_{i}f_i(\mathbf{z}_i(\tau))d\tau]_{vec}-\mathbf{y}^*||=0,\nonumber\\
		&\lim_{t\rightarrow \infty} ||\mathbf{z}(t)+\mathbf{1}_N\otimes [\int_{0}^{t}\nabla_{i}f_i(\mathbf{z}_i(\tau))d\tau]_{vec}||=0.
	\end{align}
%	and 
%	\begin{align}
%		\lim_{t\rightarrow \infty} ||\mathbf{z}(t)+\mathbf{1}_N\otimes [\int_{0}^{t}\nabla_{i}f_i(\mathbf{z}_i(\tau))d\tau]_{vec}||=0.
%	\end{align}
	Moreover, $c_{ij}$ for all $i,j\in\mathcal{V}$  converge to some finite values. 
\end{Lemma}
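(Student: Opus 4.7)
My plan proceeds by analyzing the autonomous system (\ref{auxi_1}) in error coordinates, exploiting the M-matrix structure induced by Assumption~\ref{ass3}. First I would introduce the tracking variable $w_j(t):=\int_0^t\nabla_j f_j(\mathbf{z}_j(\tau))d\tau$ and the estimation error $e_{ij}:=z_{ij}+w_j$. Since row sums of $\mathcal{L}$ vanish, a direct substitution shows $\boldsymbol{\xi}=H\mathbf{e}$, and, restricted to fixed $j$, the dynamics decouple with matrix $H_j:=\mathcal{L}+\operatorname{diag}([a_{ij}]_i)$. Strong connectivity of $\mathcal{G}$ guarantees that every node $j$ has at least one out-neighbor, hence $\operatorname{diag}([a_{ij}]_i)$ is a nonzero nonnegative diagonal; by classical M-matrix theory $H_j$ is then a non-singular M-matrix, so there exists a positive diagonal $\Xi_j$ with $\Xi_jH_j+H_j^\top\Xi_j\succ 0$. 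This weighted symmetrization plays the role that Laplacian symmetry plays in the undirected analyses of \cite{YEAUTO2021,Persis2019,Bianchi2021}.

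Next I would build a composite Lyapunov function
\[
V=\tfrac12\sum_j e_{\cdot j}^\top \Xi_j e_{\cdot j}+\tfrac12\sum_{i,j}\alpha_{ij}^{-1}(c_{ij}-c_{ij}^{*})^{2}+\tfrac{\kappa}{2}\bigl\|\mathbf{w}+\mathbf{y}^{*}\bigr\|^{2},
\]
combining a weighted consensus term, an adaptive-gain mismatch term, and a Nash-tracking term, with thresholds $c_{ij}^{*}$, weights $\alpha_{ij}$, and coupling constant $\kappa$ chosen to match. Writing $C_j:=\operatorname{diag}(c_{ij})_i$ and $P_j:=\operatorname{diag}(\rho_{ij})_i$, differentiating and splitting $C_j=(C_j-c_{ij}^{*}I)+c_{ij}^{*}I$, the $c_{ij}^{*}I$ piece produces a negative-definite contribution governed by $\Xi_jH_j+H_j^\top\Xi_j\succ 0$ while the mismatch piece telescopes against $\dot c_{ij}=\xi_{ij}^{2}$. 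The forcing $\nabla_j f_j(\mathbf{z}_j)\mathbf{1}_N$ is handled by inserting $\pm\nabla_j f_j(-\mathbf{w})$: the difference is bounded by $L\|\mathbf{e}\|$ through Assumption~\ref{ass1} and absorbed via Young's inequality, while $\nabla_j f_j(-\mathbf{w})$ couples to the Nash-tracking term through Assumption~\ref{ass2}, which at the interior equilibrium yields $(\mathbf{w}+\mathbf{y}^{*})^\top[\nabla_j f_j(-\mathbf{w})]_j\le-\omega\|\mathbf{w}+\mathbf{y}^{*}\|^{2}$. The net bound I expect is $\dot V\le-\mu_1\|\boldsymbol{\xi}\|^{2}-\mu_2\|\mathbf{w}+\mathbf{y}^{*}\|^{2}$ for some $\mu_1,\mu_2>0$.

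From this estimate three conclusions follow. First, $V$ is nonincreasing and bounded below, so $c_{ij}-c_{ij}^{*}$ is bounded; combined with $\dot c_{ij}\ge 0$, each $c_{ij}$ converges monotonically to a finite limit, which is the third claim. Second, $\boldsymbol{\xi}\in L^{2}$, and $\dot{\boldsymbol{\xi}}$ is bounded since all states and $\nabla_j f_j$ are Lipschitz, so Barbalat's lemma gives $\boldsymbol{\xi}(t)\to 0$; by invertibility of $H$ this yields $\mathbf{e}(t)\to 0$, which is the second claim. Third, $\mathbf{w}+\mathbf{y}^{*}\in L^{2}$ with bounded derivative, and Barbalat again gives $\mathbf{w}(t)\to-\mathbf{y}^{*}$, the first claim.

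The hard part is the construction of $V$. The matrix $\Xi_j(C_j+P_j)H_j$ is generally not sign-definite for a nonuniform positive diagonal $C_j+P_j$---one cannot simply factor the gain matrix out of the M-matrix inequality---so the mismatch weights $\alpha_{ij}$ and thresholds $c_{ij}^{*}$ must be arranged so that the indefinite cross terms between $(c_{ij}-c_{ij}^{*})$ and $\xi_{ij}$ arising in $\dot V$ telescope exactly against the derivative of the mismatch term. A secondary delicacy is that boundedness of $\mathbf{w}$, and hence of the forcing $\nabla_j f_j(\mathbf{z}_j)$, is not a priori clear; this must be closed inside the Lyapunov argument itself, which is precisely why the Nash-tracking term $\tfrac{\kappa}{2}\|\mathbf{w}+\mathbf{y}^{*}\|^{2}$ is included.
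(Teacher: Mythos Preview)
Your overall architecture---a composite Lyapunov function combining a consensus block, an adaptive-gain mismatch block, and the Nash-tracking block $\tfrac12\|\mathbf{w}+\mathbf{y}^*\|^2$---is exactly the one the paper uses, and your treatment of the Nash-tracking term via Assumptions~\ref{ass1}--\ref{ass2} is correct. The gap is in the consensus block. With $V_{\text{cons}}=\tfrac12 e^\top\Xi e$ you obtain
\[
\dot V_{\text{cons}}=-e^\top\Xi(c+\rho)He+\cdots
=-\sum_{i,j}[\Xi]_{ij}(c_{ij}+\rho_{ij})\,e_{ij}\xi_{ij}+\cdots,
\]
whereas the mismatch term supplies $\sum_{i,j}\alpha_{ij}^{-1}(c_{ij}-c_{ij}^*)\,\xi_{ij}^{2}$. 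These do not telescope: the first contains $e_{ij}\xi_{ij}$, the second $\xi_{ij}^{2}$, and since $\xi=He$ with $H$ nondiagonal there is no algebraic choice of constants $\alpha_{ij},c_{ij}^*$ that converts one into the other. In the undirected case this works only because one can use $e^\top H e$ with $H=H^\top$, so that $e^\top H(c+\rho)He=\xi^\top(c+\rho)\xi$ and the gain already multiplies $\xi_{ij}^2$; that identity is precisely what fails here, as you yourself note.

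The paper resolves this by replacing your quadratic-in-$e$ block with a term built directly from $\rho_{ij}=\xi_{ij}^2$, namely
\[
V_2=\epsilon\sum_{i,j}p_{ij}\Bigl(c_{ij}+\tfrac{\rho_{ij}}{2}\Bigr)\rho_{ij},
\]
with $P=\operatorname{diag}\{p_{ij}\}$ chosen so that $PH+H^\top P=Q\succ 0$ (the diagonal $P$ plays the role of your $\Xi$). Differentiating gives $\dot V_2=\epsilon\sum p_{ij}(c_{ij}+\rho_{ij})\dot\rho_{ij}+\epsilon\sum p_{ij}\rho_{ij}^2$, and since $\dot\rho_{ij}=2\xi_{ij}\dot\xi_{ij}$ the first sum equals $2\epsilon\,\xi^\top(c+\rho)P\dot\xi$. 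Substituting $\dot\xi=H(-(c+\rho)\xi+\cdots)$ and using that $(c+\rho)$ and $P$ commute (both diagonal), one lands on
\[
-\epsilon\,\xi^\top(c+\rho)(PH+H^\top P)(c+\rho)\xi=-\epsilon\,\xi^\top(c+\rho)Q(c+\rho)\xi,
\]
i.e.\ the adaptive gain appears \emph{symmetrically} around $Q\succ 0$, which is exactly the structure your $e^\top\Xi e$ cannot deliver. The leftover $\epsilon\sum p_{ij}\rho_{ij}^2$ together with $\dot V_3=\epsilon\sum p_{ij}(c_{ij}-c^*)\rho_{ij}$ then combine into $\epsilon\sum p_{ij}(c_{ij}+\rho_{ij}-c^*)\rho_{ij}$, which is dominated by the $(c+\rho)Q(c+\rho)$ term plus a $-c^*\|\xi\|^2$ contribution once $c^*$ is large. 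The forcing and Nash-tracking pieces are then handled precisely as you outlined. In short: keep your $V_1$ and $V_3$, but replace $\tfrac12 e^\top\Xi e$ by the $V_2$ above.
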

\begin{Proof}
	See Section \ref{proof_lemma3} for the proof.
\end{Proof}

Based on the above results, the convergence result can be established for the control design in \eqref{eq1}.

\begin{Theorem}\label{them1}
	Under Assumptions \ref{ass1}-\ref{ass3} and the control input in \eqref{eq1}, 
	\begin{align}
	\lim_{t\rightarrow \infty} ||\mathbf{y}(t)-\mathbf{y}^*||=0.   
	\end{align} 
	In addition, all the other variables stay bounded and converge to finite values. 
\end{Theorem}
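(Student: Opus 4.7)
The plan is to stack the three supporting lemmas into a cascade argument and then undo the coordinate transformation back to $y_i$. First, I would invoke Lemma 3 to obtain $\mathbf{z}_i(t)\to \mathbf{y}^*$, the integral limit $\int_0^{t}\nabla_i f_i(\mathbf{z}_i(\tau))d\tau \to -y_i^*$, and convergence of all adaptive gains $c_{ij}$ to finite values. Because $f_i$ is continuously differentiable and $\mathbf{y}^*$ is the Nash equilibrium (so the first-order condition $\nabla_i f_i(\mathbf{y}^*)=0$ holds), continuity of $\nabla_i f_i$ then gives $\nabla_i f_i(\mathbf{z}_i(t))\to 0$; in particular this signal stays bounded on $[0,\infty)$, exactly the premise needed by Lemma 2.

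Second, I would apply Lemma 2 to the reduced system \eqref{auxi_3}. Boundedness plus decay of $\nabla_i f_i(\mathbf{z}_i(t))$ yields $\tilde{x}_{i1}(t)\to 0$, and combining with the integral limit from Lemma 3 gives $\bar{x}_{i1}(t)\to \prod_{k=1}^{m_i-1}\theta_i^{k}\,y_i^*$ as $t\to\infty$.

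The third, and main, step is to control the remaining components $\bar{x}_{i,k}$ for $k\geq 2$ and then translate everything through $y_i = C_i T_i \bar{x}_i$. By Lemma 1, after a finite time $T$ every saturation $\phi_i(\bar{x}_{i,k})$ with $k\geq 2$ collapses to the identity. Substituting this into $u_i$ in \eqref{eq1} and then into \eqref{eq_transf}, the cancellations built into the Sussmann-style nested design produce a lower-triangular linear sub-system for $(\bar{x}_{i,2},\ldots,\bar{x}_{i,m_i})$ whose diagonal is $-\theta_i^{m_i-1},-\theta_i^{m_i-2},\ldots,-\theta_i$, hence Hurwitz, and whose only forcing is the vanishing term $-\theta_i^{m_i}\phi_i(\tilde{x}_{i1})$. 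A standard vanishing-input/ISS argument then gives $\bar{x}_{i,k}(t)\to 0$ for $k\geq 2$. A direct computation of $T_i=\mathbf{R}(A_i,B_i)\mathbf{R}(\bar{A}_i,\bar{B}_i)^{-1}$, using the reversal structure of $\mathbf{R}(A_i,B_i)$ and the Krylov structure of $\mathbf{R}(\bar{A}_i,\bar{B}_i)$, shows $T_i e_1 = [1/\prod_{k=1}^{m_i-1}\theta_i^{k},0,\ldots,0]^{T}$, so $x_i(t)\to [y_i^*,0,\ldots,0]^{T}$ and therefore $y_i = C_i x_i \to y_i^*$. Boundedness and convergence of all other signals ($c_{ij}$, $z_{ij}$, $\bar{x}_i$) are inherited from Lemmas 1--3 together with the above analysis.

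The hardest part will be the nested-saturation step above: one has to recognize that once the inner saturations become inactive, the feedback has been precisely engineered so that the closed-loop sub-dynamics for the upper components form a lower-triangular Hurwitz system perturbed by a decaying input, and to verify this structurally and uniformly in the heterogeneous orders $m_i$. The back-transformation to $y_i$ is, by contrast, mostly a bookkeeping verification using the controllability-matrix form of $T_i$ and the specific pattern of $\bar{A}_i, \bar{B}_i$.
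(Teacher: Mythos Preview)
Your proposal is correct and follows essentially the same cascade as the paper: Lemma~3 for the estimator layer, Lemma~2 for $\tilde{x}_{i1}$, an ISS argument for $\bar{x}_{ik}$ ($k\geq 2$) once the inner saturations collapse, and the back-transformation via $T_i$. The only cosmetic differences are that the paper obtains $\nabla_i f_i(\mathbf{z}_i(t))\to 0$ via Barbalat's Lemma on the convergent integral (rather than continuity plus $\mathbf{z}_i\to\mathbf{y}^*$) and handles the upper components one scalar ISS step at a time rather than as a single lower-triangular block; note also that Lemma~1 should logically precede your appeal to Lemma~2, since the reduced equation \eqref{auxi_3} coincides with the true $\tilde{x}_{i1}$-dynamics only after the finite time $T$ furnished by Lemma~1.
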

\begin{proof}
	See Section \ref{proof_them1} for the proof. 
\end{proof}

Theorem \ref{them1} illustrates that the Nash equilibrium is \textit{globally} asymptotically stable though the boundedness of the control inputs is considered. Furthermore, all the other variables (i.e., $\mathbf{x}_{i}(t)$, $c_{ij}(t)$ and $z_{ij}(t)$ for all $i,j\in\mathcal{V}$) stay bounded and converge to some finite values.

\section{Discussions on The Presented Results}\label{dis_cu}
In this section, we provide some insights on the presented results, in terms of first- and second- order players, undirected graph and no actuator limitation. This helps to establish a link between the presented results and the existing works.

\subsection{First- and second-order integrator-type players}

%In Section \ref{main}, it is supposed that $m_i>1$. However, if $m_i=1,$ i.e., 

For $m_i=1,$ the system \eqref{eq_integra} can be written as
\begin{align}
	\dot{x}_{i1}=u_i, \ \ \ \ \ \ \ y_{i}=x_{i1}.
\end{align}

Then,  one can design $u_i$ as  
\begin{align}\label{epp1}
u_i=&-\phi_i(x_{i1}+\int_{0}^{t}\nabla_{i}f_i(\mathbf{z}_i(\tau))d\tau),
\end{align}
where the definitions of other variables follow those in \eqref{eq1}-\eqref{ada}. Following Theorem \ref{them1}, the subsequent corollary can be obtained. 

\begin{Corollary}\label{cor1}
	Under Assumptions \ref{ass1}-\ref{ass3} and the control input in  \eqref{epp1}
	\begin{align}
		\lim_{t\rightarrow \infty}||\mathbf{y}(t)-\mathbf{y}^*||=0,
	\end{align}
	and all the other variables stay bounded and converge to some finite values. 
\end{Corollary}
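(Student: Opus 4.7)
The plan is to specialize the argument of Theorem \ref{them1} to $m_i=1$, where the dimensional reduction collapses the nested saturation structure down to a single saturation and makes Lemma \ref{Lemma1} vacuous. The key observation is that the auxiliary system \eqref{ada} driving $z_{ij}$ and $c_{ij}$ is identical to the one used in the general case, since it depends only on $\mathbf{z}$ and on $\nabla_j f_j(\mathbf{z}_j)$, not on the players' physical states. Hence Lemma \ref{lemma3} applies verbatim, yielding $[\int_0^t \nabla_i f_i(\mathbf{z}_i(\tau))\,d\tau]_{vec}\to -\mathbf{y}^*$, $\mathbf{z}(t)\to \mathbf{1}_N\otimes \mathbf{y}^*$, and finite limits for every $c_{ij}$. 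Combined with Assumption \ref{ass1} and the Nash stationarity identity $\nabla_i f_i(\mathbf{y}^*)=0$, this forces $\nabla_i f_i(\mathbf{z}_i(t))\to 0$ while remaining uniformly bounded on $[0,\infty)$.

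I would then introduce the same shifted coordinate used in the proof of Theorem \ref{them1}, namely $\tilde{x}_{i1}=x_{i1}+\int_0^t \nabla_i f_i(\mathbf{z}_i(\tau))\,d\tau$. Differentiating and substituting \eqref{epp1} gives
\begin{equation}
\dot{\tilde{x}}_{i1}=-\phi_i(\tilde{x}_{i1})+\nabla_i f_i(\mathbf{z}_i(t)),
\end{equation}
which is exactly the reduced subsystem \eqref{auxi_3} in the degenerate regime $m_i=1$ with effective scaling $\theta_i=1$---this also explains why the prefactor $\theta_i^{m_i}$ appearing in \eqref{eq1} is absent from \eqref{epp1}, as no multi-layer saturation is needed to dominate higher-order states. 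The ISS-type argument underlying Lemma \ref{lemma2}, applied with $V_i=\tfrac{1}{2}\tilde{x}_{i1}^2$, then shows that $\tilde{x}_{i1}(t)$ stays bounded for all $t\geq 0$ and that its asymptotic magnitude is upper-bounded by a class-$\mathcal{K}$ function of $\sup_{\tau\geq t}|\nabla_i f_i(\mathbf{z}_i(\tau))|$, so $\tilde{x}_{i1}(t)\to 0$ as $t\to\infty$.

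Unwinding the change of variables finally gives $x_{i1}(t)=\tilde{x}_{i1}(t)-\int_0^t \nabla_i f_i(\mathbf{z}_i(\tau))\,d\tau \to 0-(-y_i^*)=y_i^*$, hence $\mathbf{y}(t)\to \mathbf{y}^*$; boundedness and convergence of the remaining variables follow immediately from Lemma \ref{lemma3} and the boundedness already established for $\tilde{x}_{i1}$. The only potentially delicate step is the vanishing-perturbation analysis of the $\tilde{x}_{i1}$ subsystem, but this is nothing more than the $m_i=1$, $\theta_i=1$ instance of Lemma \ref{lemma2}, so no genuinely new difficulty arises; the corollary is a clean specialization of Theorem \ref{them1} with the nested-saturation machinery stripped away.
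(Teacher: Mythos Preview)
Your proposal is correct and follows essentially the same route as the paper: invoke Lemma~\ref{lemma3} for the decoupled $(\mathbf{z},c)$-subsystem, deduce $\nabla_i f_i(\mathbf{z}_i(t))\to 0$, introduce the shifted state $\tilde{x}_{i1}=x_{i1}+\int_0^t\nabla_i f_i(\mathbf{z}_i(\tau))\,d\tau$, and apply the ISS analysis of Lemma~\ref{lemma2} (in its $m_i=1$ form) to conclude $\tilde{x}_{i1}\to 0$ and hence $y_i\to y_i^*$. The only cosmetic differences are that the paper obtains $\nabla_i f_i(\mathbf{z}_i(t))\to 0$ via Barbalat's lemma (rather than your direct continuity argument through $\mathbf{z}_i\to\mathbf{y}^*$) and uses the integral Lyapunov function $\int_0^{\tilde{x}_{i1}}\phi_i(\tau)\,d\tau$ instead of the quadratic $\tfrac{1}{2}\tilde{x}_{i1}^2$; both choices are equivalent in effect here.
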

\begin{Proof}
	See Section \ref{proof_cor1} for the proof. 
\end{Proof}

 Moreover, for second-order players, the seeking strategy in \eqref{eq1} is written as
\begin{align}\label{eq444}
u_i=&-\theta_i \phi_i(x_{i2})\\
&-\theta_i^{2}\phi_i(\theta_ix_{i1}+x_{i2} +\theta_i\int_{0}^{t}\nabla_{i}f_i(\mathbf{z}_i(\tau))d\tau),\nonumber
\end{align}
with other variables defined in \eqref{eq1}.

Compared with bounded controls designed for first- and second-order players in \cite{YETAC2021}, the control inputs in \eqref{epp1} and \eqref{eq444} provide alternative designs to achieve distributed Nash equilibrium seeking with bounded controls. Moreover, the presented methods have the following elegant features:
\begin{enumerate}[i)]
	\item  The presented methods are fully distributed while the methods in \cite{YETAC2021} contain control gains depending on some global information. 
	\item  It is shown that the Nash equilibrium is globally asymptotically stable under the proposed methods, while in \cite{YETAC2021}, only semi-global results were given for second-order players.
	\item  The requirement on the boundedness of $\frac{\partial f_i(\mathbf{y})}{\partial y_i\partial y_j}$ for all $i,j\in\mathcal{V}$ in \cite{YETAC2021} is removed from the paper. 
\end{enumerate}

\subsection{Undirected communication graphs}
 In \cite{YEAUTO2021}, adaptive approaches are proposed to achieve fully distributed Nash equilibrium seeking for \textbf{first-order} players under undirected communication graphs.  For the case of undirected communication graph, following the adaptive design in \cite{YEAUTO2021}, $u_i$ is designed as 
 \begin{align}\label{eqpqp1}
 u_i=&-\sum\nolimits_{k=1}^{m_i-1}\theta_i^k \phi_i(\bar{x}_{i(m_i-k+1)})\allowdisplaybreaks\\
 &-\theta_i^{m_i}\phi_i(\bar{x}_{i1}+\prod_{k=1}^{m_i-1}\theta_i^k\int_{0}^{t}\nabla_{i}f_i(\mathbf{z}_i(\tau))d\tau),\nonumber\allowdisplaybreaks\\
 \dot{z}_{ij}=&-c_{ij}\xi_{ij},~~~~~~
 \dot{c}_{ij}=\xi_{ij}^2,\nonumber
 \end{align}
Correspondingly, the following corollary can be obtained.
 \begin{Corollary}\label{col_5}
 	Under Assumptions \ref{ass1}-\ref{ass2} and the control input in \eqref{eqpqp1},
 \begin{align}
 	\lim_{t\rightarrow \infty} ||\mathbf{y}(t)-\mathbf{y}^*||=0,
 \end{align}
  and all the other variables stay bounded given that the communication graph is undirected and connected. 
 \end{Corollary}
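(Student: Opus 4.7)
The plan is to mirror the proof of Theorem \ref{them1}, since the closed-loop system under \eqref{eqpqp1} has the same outer structure as that under \eqref{eq1}--\eqref{ada}: the bounded-control transformation, the saturation analysis for the higher coordinates $\bar{x}_{ik}$ with $k\geq 2$, and the reduction to the single-integrator-like dynamics for $\bar{x}_{i1}$. I would first note that Lemmas \ref{Lemma1} and \ref{lemma2} depend only on the transformed dynamics, the saturation structure and the boundedness of $\nabla_i f_i(\mathbf{z}_i(t))$; they are insensitive to whether the auxiliary estimator $(\mathbf{z},c)$ is driven by the design in \eqref{ada} or the simpler design in \eqref{eqpqp1}, and also to whether the graph is directed or undirected. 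Therefore these two lemmas can be reused verbatim, and the whole argument collapses to establishing an analogue of Lemma \ref{lemma3} for the estimator dynamics
\begin{align*}
\dot{z}_{ij}=-c_{ij}\xi_{ij},\qquad \dot{c}_{ij}=\xi_{ij}^2,
\end{align*}
under an undirected, connected $\mathcal{G}$.

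The main work is this consensus-plus-tracking step, and this is where undirectedness is genuinely needed. I would introduce $\eta_j(t)=-\int_0^t\nabla_j f_j(\mathbf{z}_j(\tau))d\tau$ so that $\tilde{z}_{ij}=z_{ij}-\eta_j$ gives $\boldsymbol{\xi}=H\tilde{\mathbf{z}}$, and note that for an undirected connected graph the matrix $H=\mathcal{L}\otimes I_N+A_0$ is \emph{symmetric} and positive definite (since $\mathcal{L}$ is symmetric PSD and $A_0$ is a nonnegative diagonal that activates whenever $a_{ij}>0$). With symmetry in hand I would use the composite Lyapunov function
\begin{align*}
V=\tfrac{1}{2}\tilde{\mathbf{z}}^T H\tilde{\mathbf{z}}+\tfrac{1}{2}\sum\nolimits_{i,j}(c_{ij}-c^*)^2+\tfrac{\kappa}{2}\|\boldsymbol{\eta}-\mathbf{y}^*\|^2,
\end{align*}
with $c^*>0$ and $\kappa>0$ to be fixed. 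A direct computation using $\boldsymbol{\xi}=H\tilde{\mathbf{z}}$ and $\dot{\boldsymbol{\eta}}=-[\nabla_i f_i(\mathbf{z}_i)]_{vec}$ yields
\begin{align*}
\dot{V}=-c^*\|\boldsymbol{\xi}\|^2+\boldsymbol{\xi}^T\bigl(\mathbf{1}_N\otimes[\nabla_i f_i(\mathbf{z}_i)]_{vec}\bigr)-\kappa(\boldsymbol{\eta}-\mathbf{y}^*)^T[\nabla_i f_i(\mathbf{z}_i)]_{vec}.
\end{align*}
Splitting $[\nabla_i f_i(\mathbf{z}_i)]_{vec}=[\nabla_i f_i(\boldsymbol{\eta})]_{vec}+\Delta$, applying Assumption \ref{ass2} to the first piece (using $[\nabla_i f_i(\mathbf{y}^*)]_{vec}=0$), and bounding $\Delta$ via the Lipschitz condition of Assumption \ref{ass1} in terms of $\|\tilde{\mathbf{z}}\|\leq \lambda_{\min}(H)^{-1/2}\|\boldsymbol{\xi}\|$, standard Young-inequality completions then give $\dot{V}\leq-\alpha_1\|\boldsymbol{\xi}\|^2-\alpha_2\|\boldsymbol{\eta}-\mathbf{y}^*\|^2$ for some $\alpha_1,\alpha_2>0$, provided $c^*$ is chosen sufficiently large relative to $\kappa$, $\{l_i\}$ and $\lambda_{\min}(H)$. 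Since $\dot{c}_{ij}\geq 0$, the $c_{ij}$'s are monotone and the offset $c^*$ need not be known to the designer; $V$ remains bounded, $c_{ij}$ converges to some finite limit, and Barbalat's Lemma delivers $\boldsymbol{\xi}\to 0$, $\tilde{\mathbf{z}}\to 0$ and $\boldsymbol{\eta}\to\mathbf{y}^*$.

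Once the estimator analogue of Lemma \ref{lemma3} is in place, plugging it back into Lemma \ref{lemma2} shows $\tilde{x}_{i1}\to 0$, whence $\bar{x}_{i1}(t)\to y_i^*$ after undoing the transformation, so $\mathbf{y}(t)\to\mathbf{y}^*$; the boundedness and convergence of the remaining states then follows exactly as in the proof of Theorem \ref{them1}. The hard part, and the only real novelty with respect to Theorem \ref{them1}, is the Lyapunov construction in the middle paragraph: without the $\rho_{ij}$ gain-shaping term in $\dot{z}_{ij}$ one loses the skew-symmetry-cancelling mechanism that made the directed case work, and the cross term $\tilde{\mathbf{z}}^T H c\boldsymbol{\xi}$ can be consolidated into $\boldsymbol{\xi}^T c\boldsymbol{\xi}$ only because $H$ is symmetric. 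This is precisely why Corollary \ref{col_5} is stated for undirected graphs and why the scheme in \eqref{eq1}--\eqref{ada} was needed to handle the directed case of Theorem \ref{them1}.
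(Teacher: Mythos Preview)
Your proposal is correct and follows essentially the same route as the paper. The paper's own proof also reduces everything to an undirected analogue of Lemma~\ref{lemma3} for the simplified estimator $\dot z_{ij}=-c_{ij}\xi_{ij}$, $\dot c_{ij}=\xi_{ij}^2$, and uses the same Lyapunov function you propose, namely $\tfrac{1}{2}\|\boldsymbol{\eta}-\mathbf{y}^*\|^2+\tilde{\mathbf{z}}^TH\tilde{\mathbf{z}}+\sum_{i,j}(c_{ij}-c_{ij}^*)^2$ (up to inessential weights), deferring the remaining steps to the proof of Theorem~\ref{them1}. One small slip: from $\boldsymbol{\xi}=H\tilde{\mathbf{z}}$ with $H$ symmetric positive definite you get $\|\tilde{\mathbf{z}}\|\le \lambda_{\min}(H)^{-1}\|\boldsymbol{\xi}\|$, not $\lambda_{\min}(H)^{-1/2}$, but this does not affect the argument.
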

 \begin{Proof}
 	See Section \ref{proof_col_5} for the proof. 
 \end{Proof}

 Corollary \ref{col_5} indicates that under undirected communication graphs, the adaptive law in \cite{YEAUTO2021} can be employed to establish the control law for high-order players.  However, the analysis  depends on symmetric information exchange among the players and hence, the adaptive designs therein fail to work for directed communication graphs.  Therefore, this paper has the following advantages:
 \begin{enumerate}[i)]
 	\item  Unlike \cite{YEAUTO2021},\cite{Persis2019},\cite{Bianchi2021} that only consider undirected information exchange, the presented design in this paper can accommodate directed graphs.
 	\item Different from \cite{YEAUTO2021} that only considered first-order players,  players with multi-integrator type dynamics are addressed, which cover first-order ones as special cases.
 	\item The control inputs in this paper are restricted within a predefined domain, while in  \cite{YEAUTO2021},\cite{Persis2019},\cite{Bianchi2021}, the actuator limitations were not addressed. 
 	\item Different from \cite{Persis2019},\cite{Bianchi2021} that required two-hop communications, only one-hop communication is needed, which is desirable for distributed systems.   
 \end{enumerate}

 \subsection{Without actuator limitation}
 If the system is without any actuator limitation, the saturation function can be removed from the designed controls, which gives the following control input
 \begin{align}\label{actu_limi}
 u_i=&-\sum\nolimits_{k=1}^{m_i-1}\theta_i^k \bar{x}_{i(m_i-k+1)}\nonumber\\
 &-\theta_i^{m_i}(\bar{x}_{i1}+\prod_{k=1}^{m_i-1}\theta_i^k\int_{0}^{t}\nabla_{i}f_i(\mathbf{z}_i(\tau))d\tau),
 \end{align}
 with other variables defined in \eqref{eq1}-\eqref{ada}. 
 
In this case, the proposed method is still effective and the following corollary can be obtained. 
 \begin{Corollary}\label{cor41}
 	Under Assumptions \ref{ass1}-\ref{ass3} and \eqref{actu_limi},  
 	\begin{align}
 	\lim_{t\rightarrow \infty} ||\mathbf{y}(t)-\mathbf{y}^*||=0.   
 	\end{align} 
 	In addition, all the other variables stay bounded and converge to finite values. 
 \end{Corollary}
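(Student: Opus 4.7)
My plan exploits two observations: (i) the adaptive consensus subsystem~\eqref{ada} is identical to the one used in Theorem~\ref{them1} and, crucially, is \emph{decoupled} from the $\bar{x}_i$ dynamics; and (ii) once the saturations $\phi_i$ are removed, the $\bar{x}_i$ closed loop becomes a linear time-invariant cascade with a triangular Hurwitz state matrix. Together, these replace the nonlinear nested-saturation analysis of Theorem~\ref{them1} by a one-shot linear ISS argument.

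By (i), Lemma~\ref{lemma3} applies verbatim and yields that each $c_{ij}(t)$ converges to a finite value, $-[\int_{0}^{t}\nabla_{i}f_{i}(\mathbf{z}_{i}(\tau))d\tau]_{vec}\to\mathbf{y}^{*}$, $\mathbf{z}(t)+\mathbf{1}_{N}\otimes[\int_{0}^{t}\nabla_{i}f_{i}(\mathbf{z}_{i}(\tau))d\tau]_{vec}\to 0$, and, using Assumption~\ref{ass1} together with boundedness of $\mathbf{z}_{i}$, also $\nabla_{i}f_{i}(\mathbf{z}_{i}(t))\to 0$. For (ii), introduce $\tilde{x}_{i1}=\bar{x}_{i1}+\prod_{k=1}^{m_{i}-1}\theta_{i}^{k}\int_{0}^{t}\nabla_{i}f_{i}(\mathbf{z}_{i}(\tau))d\tau$ and stack $\boldsymbol{\eta}_{i}=[\tilde{x}_{i1},\bar{x}_{i2},\dots,\bar{x}_{im_{i}}]^{T}$. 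Substituting \eqref{actu_limi} into \eqref{eq_transf} and using $\bar{B}_{i}=\mathbf{1}_{m_{i}}$, the upper-triangular entries of $\bar{A}_{i}$ in each row cancel exactly against the corresponding feedback terms (this is the unsaturated analogue of the cancellation that underlies the Sussmann--Teel construction), producing
\[
\dot{\boldsymbol{\eta}}_{i}\;=\;M_{i}\boldsymbol{\eta}_{i}\;+\;q_{i}\,\nabla_{i}f_{i}(\mathbf{z}_{i}(t)),
\]
where $M_{i}$ is lower triangular with diagonal entries $-\theta_{i}^{m_{i}},-\theta_{i}^{m_{i}-1},\dots,-\theta_{i}$ and $q_{i}=[\prod_{k=1}^{m_{i}-1}\theta_{i}^{k},0,\dots,0]^{T}$. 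Since $\theta_{i}\in(0,1)$, every diagonal entry is strictly negative, so $M_{i}$ is Hurwitz and the system is globally ISS with input $\nabla_{i}f_{i}(\mathbf{z}_{i})$.

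Combining the two steps, the vanishing of the ISS input forces $\boldsymbol{\eta}_{i}(t)\to 0$; in particular $\bar{x}_{ij}(t)\to 0$ for $j\geq 2$ and $\bar{x}_{i1}(t)\to\prod_{k=1}^{m_{i}-1}\theta_{i}^{k}\,y_{i}^{*}$. Reverting the coordinate transformation $x_{i}=T_{i}\bar{x}_{i}$ exactly as in the last step of the proof of Theorem~\ref{them1} then yields $y_{i}(t)\to y_{i}^{*}$, i.e.\ $\mathbf{y}(t)\to\mathbf{y}^{*}$. Boundedness and convergence to finite values of all remaining variables follow from the ISS estimate for $\boldsymbol{\eta}_{i}$, the boundedness of $c_{ij}$, and the convergence of $\mathbf{z}_{i}$ supplied by Lemma~\ref{lemma3}.

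The only genuine work is verifying the cancellation that puts the closed loop into the triangular form above; this is straightforward bookkeeping of the $\theta_{i}$-power pattern in $\bar{A}_{i}$ rather than a real obstacle. Compared with Theorem~\ref{them1}, the corollary is strictly simpler: Lemma~\ref{Lemma1} (finite-time entry into the unsaturated region) and Lemma~\ref{lemma2} (ISS through the nonlinearity $\phi_{i}$) are both bypassed, because once the saturations are removed the global linear ISS property holds for all $t\geq 0$.
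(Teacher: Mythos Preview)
Your proposal is correct and follows essentially the same route as the paper. The paper's proof is the one-line remark that one can ``follow Steps~2--3 in the proof of Theorem~\ref{them1}''; those steps are precisely the cascade ISS argument you carry out, and you correctly identify that Lemma~\ref{Lemma1} and Lemma~\ref{lemma2} become unnecessary once the saturations are removed. The only cosmetic difference is that Steps~2--3 treat $\tilde{x}_{i1}$ first and then peel off $\bar{x}_{i2},\bar{x}_{i3},\dots$ one at a time, whereas you package the whole thing as a single linear system $\dot{\boldsymbol{\eta}}_{i}=M_{i}\boldsymbol{\eta}_{i}+q_{i}\nabla_{i}f_{i}(\mathbf{z}_{i})$ with lower-triangular Hurwitz $M_{i}$; your computation of $M_{i}$ and $q_{i}$ is correct and this one-shot version is arguably cleaner, but it is the same argument.
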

 \begin{Proof}
 	The proof can be completed by following Steps 2-3 in the proof of Theorem \ref{them1}. 
 \end{Proof}
 
From the above discussions, it is clear that the considered problem covers the problem addressed in \cite{YETAC2021} as a special case. Moreover, for undirected graphs, the adaptive design in \cite{YEAUTO2021} can be employed in the control design to find the Nash equilibrium in a fully distributed fashion. 
% From the above observations, one finds that the presented methods cover the problem considered in \cite{YETAC2021} as a special case, and the adaptive designs in  \cite{YEAUTO2021},\cite{Persis2019},\cite{Bianchi2021} fail to work for directed graphs. 
 \section{Numerical verifications}\label{num}
In this section, a numerical example with $6$ players is simulated. In the considered game, each player $i$'s objective function is defined as 
\begin{align}
	f_i(\mathbf{y})=&y_i^2+y_i+(y_i-y_{i+1})^2,  i\in\{1,2,\cdots,5\},\nonumber\\
	f_6(\mathbf{y})=&y_6^2+y_6+(y_6-y_1)^2,\nonumber
	\end{align}
by which the Nash equilibrium is $y_i=-0.5,\forall i\in\{1,2,\cdots,6\}$. 
In the simulation, the communication graph $\mathcal{G}$ is given in Fig. \ref{graph1}, which is directed and strongly connected. In addition, $\theta_i=\frac{1}{3}$, $m_i=3$,  $x_{i1}(0)=i$ and the initial conditions for all the other variables in \eqref{eq1}-\eqref{ada} are set as $1$. 
\begin{figure}
	\centering
	\includegraphics[width=5.5cm,height=3cm]{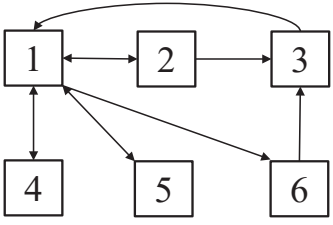}
	\caption{The strongly connected digraphs for the players.}\label{graph1}
\end{figure}

In the simulation, it is supposed that   $\Delta_i=1,$ which ensures that $|u_i|\leq0.4815$ for all $i\in\{1,2,\cdots,6\}$. With the control input design in 
\eqref{eq1}-\eqref{ada}, the evolution of the players' actions and control inputs are shown in Fig. \ref{figure1}, from which it is clear that the players' actions are convergent to the Nash equilibrium and the control inputs are restricted within the required domain. Moreover, the auxiliary  variables $c_{ij}(t)$ and $z_{ij}(t)$ are plotted in Fig. \ref{figure2}, from which it can be seen that they stay bounded and converge to finite values. 
To this end, the convergence of the developed algorithm has been numerically validated. 

\begin{figure}[t]
	\centering
	% Requires \usepackage{graphicx}
	\includegraphics[width=8.5cm,height=5.5cm]{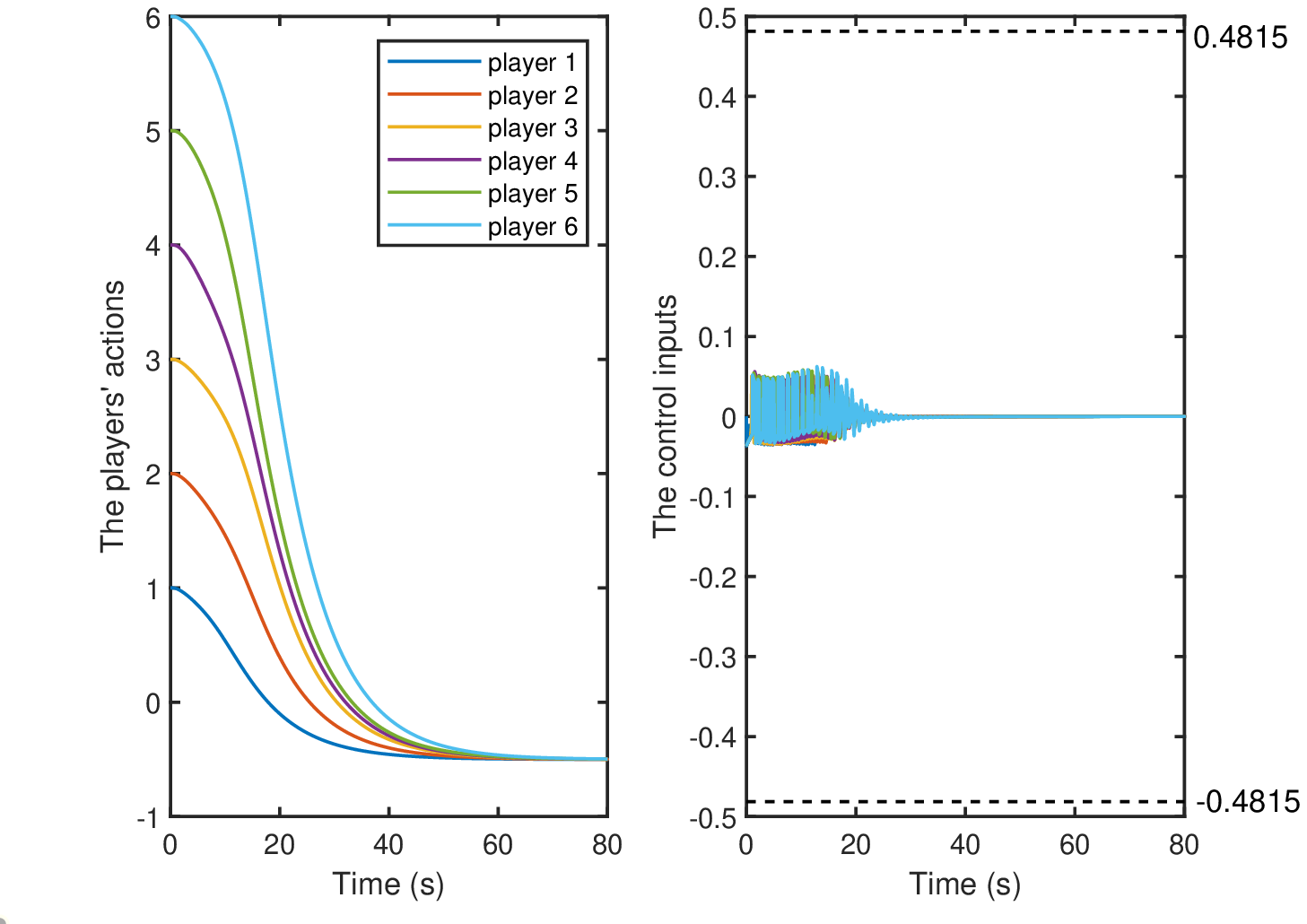}
	\caption{The players' actions $y_i(t)$ and control signals $u_i$ generated by \eqref{eq1}-\eqref{ada}.}\label{figure1}
\end{figure}

\begin{figure}[t]
	\centering
	% Requires \usepackage{graphicx}
	\includegraphics[width=8.5cm,height=5.5cm]{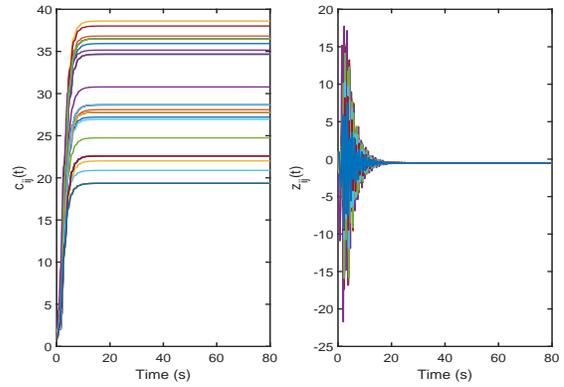}
	\caption{The evolution of the auxiliary variables $c_{ij}(t)$ and $z_{ij}(t)$ for $i,j\in\{1,2,\cdots,6\}$ generated by \eqref{eq1}-\eqref{ada}.}\label{figure2}
\end{figure}
\begin{figure}[t]
	
	% Requires \usepackage{graphicx}
	\hspace{-2mm}\includegraphics[width=8.5cm,height=5.5cm]{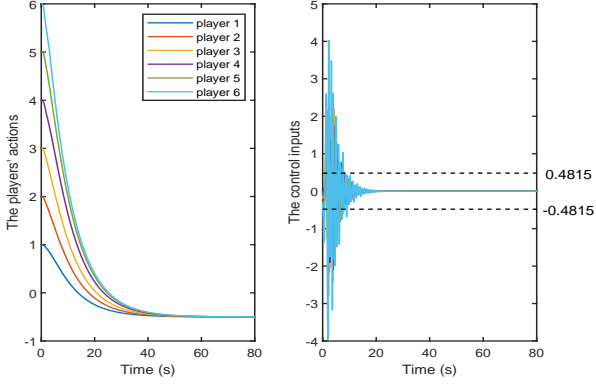}
	\caption{The players' actions and control inputs generated by \eqref{actu_limi}.}\label{figure3}
\end{figure}
To further illustrate the functionality of the  saturation functions in the proposed method, they are removed and correspondingly the method in  \eqref{actu_limi} is simulated. With all the settings kept the same as the case with saturation functions, the players' actions and control inputs generated by \eqref{actu_limi} are plotted in Fig.  \ref{figure3}. From this figure, it is clear that the players' actions are still convergent to the Nash equilibrium but the controls are sometimes out of $[-0.4815,0.4815].$ Comparing Fig. \ref{figure1} with Fig. \ref{figure3}, it can be concluded that the saturation functions are effective to restrict the controls within the required domain.

 \section{Conclusions}\label{concl}
 This paper contributes to finding the Nash equilibrium in a fully distributed fashion for high-order players subject to actuator limitations. A linear transformation is firstly applied to the players' dynamics, based on which multiple saturation functions are employed to develop the control inputs. With the saturation functions, the control inputs can be restricted within the required region. Moreover,  the control gains are designed to be adaptive, which allow asymmetric information exchange among the players and lead to fully distributed schemes. It is proven that, by the designed bounded control inputs, the players' actions are convergent to the Nash equilibrium.

\section{Appendix}
\subsection{Proof of Lemma \ref{Lemma1}}\label{proof_Lemma1}

By \eqref{eq_transf} and \eqref{eq1},
it can be obtained that 
\begin{align}
%\dot{\bar{x}}_{im_i}=&-\sum_{k=1}^{m_i-1}\theta_i^k \phi_i(\bar{x}_{i(m_i-k+1)}) \nonumber\\
%&-\theta_i^{m_i}\phi_i(\bar{x}_i+\prod_{k=1}^{m_i-1}\theta_i^k\int_{0}^{t}\nabla_{i}f_i(\mathbf{z}_i(\tau))d\tau)\nonumber\\
\dot{\bar{x}}_{im_i}=& -\theta_i \phi_i(\bar{x}_{im_i})-\sum\nolimits_{k=2}^{m_i-1}\theta_i^k \phi_i(\bar{x}_{i(m_i-k+1)}) \nonumber\\
&-\theta_i^{m_i}\phi_i(\bar{x}_i+\prod_{k=1}^{m_i-1}\theta_i^k\int_{0}^{t}\nabla_{i}f_i(\mathbf{z}_i(\tau))d\tau) \nonumber\\
\leq &-\theta_i \phi_i(\bar{x}_{im_i})+\theta_i^2\Delta_i/(1-\theta_i). \label{m_comp}
\end{align}

Define $V_{im_i}=\int_{0}^{\bar{x}_{im_i}}\phi_i(\tau)d\tau. $
%\begin{align}
%V_{im_i}=\int_{0}^{\bar{x}_{im_i}}\phi_i(\tau)d\tau. 
%\end{align}
Then, it can be easily obtained that
\begin{align}
V_{im_i}=\left\{
\begin{aligned}
&\Delta_i^2/2+(|x_{im_i}|-\Delta_i)\Delta_i~~~~~~~\text{if}~|x_{im_i}|>\Delta_i
\nonumber	\\&x_{im_i}^2/2~~~~~~~~~~~~~~~~~\text{if}~|x_{im_i}|\leq \Delta_i. \nonumber
\end{aligned}
\right.
\end{align}
Therefore, $V_{im_i}$ is positive definite and radially unbounded.  By Lemma 4.3 in \cite{Nonlinear2002}, 	there are $\mathcal{K}_{\infty}$ functions $\alpha_1$ and $\alpha_2$ such that 
$
\alpha_1(|\bar{x}_{im_i}|)\leq V_{im_i}\leq \alpha_2(|\bar{x}_{im_i}|). 
$ 
Taking the time derivative of $V_{im_i}$ gives
\begin{align}
\dot{V}_{im_i}\leq& -\theta_i \phi_i^2(\bar{x}_{im_i})+\theta_i^2\Delta_i|\phi_i(\bar{x}_{im_i})|/(1-\theta_i)\nonumber\\
\leq	&-\sigma_1 \theta_i \phi_i^2(\bar{x}_{im_i}), 
\end{align}
%\begin{align}
%|\phi_i(\bar{x}_{im_i})|>\frac{\theta_i\Delta_i}{(1-\sigma_1)(1-\theta_i)},
%\end{align}
for $|\phi_i(\bar{x}_{im_i})|>\theta_i\Delta_i/((1-\sigma_1)(1-\theta_i)),$
where $\sigma_1\in(0,1)$ is a constant.

\textbf{Case I:} $|\phi_i(\bar{x}_{im_i})|=\Delta_i$. Let $\theta_i^*$ be a positive constant such that $\frac{\theta_i^*}{(1-\sigma_1)(1-\theta_i^*)}=1$. Then,  it is clear that $\dot{V}_{im_i}<0$ is always satisfied for $\theta_i\in (0,\theta_i^*)$, indicating that if $|\bar{x}_{im_i}(0)|>\Delta_i$,  $|\bar{x}_{im_i}(t)|$ is bounded and  there exists a positive  constant $\bar{T}_1$ such that $|\bar{x}_{im_i}(t)|\leq\Delta_i$ for $t>\bar{T}_1.$ 

\textbf{Case II:} $|\phi_i(\bar{x}_{im_i})|=\bar{x}_{im_i}$. In this case, $\dot{V}_{im_i}\leq	-\sigma_1 \theta_i \phi_i^2(\bar{x}_{im_i}),$
%\begin{align}
%\dot{V}_{im_i}\leq	-\sigma_1 \theta_i \phi_i^2(\bar{x}_{im_i}),
%\end{align}
for all $|\bar{x}_{im_i}|>\frac{\theta_i\Delta_i}{(1-\sigma_1)(1-\theta_i)}.$
%\begin{align}
%|\bar{x}_{im_i}|>\frac{\theta_i\Delta_i}{(1-\sigma_1)(1-\theta_i)}.
%\end{align}
Note that as $V_{im_i}$ itself is a $\mathcal{K}_{\infty}$ function, one can choose $\alpha_1(|\bar{x}_{im_i}|)=\alpha_2(|\bar{x}_{im_i}|)=V_{im_i}$ and hence, if $\theta_i<\theta_i^*,$
$\frac{\theta_i}{(1-\sigma_1)(1-\theta_i)}<1$. Then,  there exists  a class $\mathcal{KL}$ function $\beta_1$ and for every $|\bar{x}_{im_i}(0)|<\Delta_i$, there exists a constant $\tilde{T}_1\geq 0$ such that 	
\begin{align}
|\bar{x}_{im_i}(t)|\leq & \beta_1(|\bar{x}_{im_i}(0)|,t),\forall t<\tilde{T}_1,\nonumber\\
|\bar{x}_{im_i}(t)|<&\theta_i\Delta_i/((1-\sigma_1)(1-\theta_i)),\forall t\geq\tilde{T}_1,
\end{align}
based on Theorem 4.19 in \cite{Nonlinear2002}.

Summarizing the above two cases, one gets that for any initial condition, $|\bar{x}_{im_i}(t)|\leq\Delta_i,\forall t\geq T_1,$
%\begin{align}
%|\bar{x}_{im_i}(t)|\leq\Delta_i,\forall t>T_1,
%\end{align}
For some $T_1\geq 0$. 
Note that for each $\theta_i\in (0,\frac{1}{2})$, $\frac{\theta_i}{(1-\sigma_1)(1-\theta_i)}<1$ is satisfied and hence, the above conclusion holds for all $\theta\in(0,\frac{1}{2}).$
Recalling that 
\begin{align}
\dot{\bar{x}}_{i(m_i-1)}=&\theta_i \bar{x}_{im_i}+u_i,\nonumber
\end{align}
it can be easily obtained that there is no finite escape time for $x_{i(m_i-1)}(t)$ based on the boundedness of $\bar{x}_{im_i}$ and the control inputs. Therefore, $x_{i(m_i-1)}(t)$  would stay bounded for $t<T_1$. Moreover, for $t\geq T_1$, one has
\begin{align}
&\dot{\bar{x}}_{i(m_i-1)}=\theta_i \bar{x}_{im_i}+u_i\nonumber\\
=	& -\theta_i^2\phi_i(\bar{x}_{i(m_i-1)})-\sum\nolimits_{k=3}^{m_i-1}\theta_i^k \phi_i(\bar{x}_{i(m_i-k+1)}) \nonumber\\
&-\theta_i^{m_i}\phi_i(\bar{x}_i+\prod_{k=1}^{m_i-1}\theta_i^k\int_{0}^{t}\nabla_{i}f_i(\mathbf{z}_i(\tau))d\tau).
\end{align}

Define $V_{i(m_i-1)}=\int_{0}^{\bar{x}_{i(m_i-1)}}\phi_i(\tau)d\tau.$
%\begin{align}
%V_{i(m_i-1)}=\int_{0}^{\bar{x}_{i(m_i-1)}}\phi_i(\tau)d\tau.\nonumber
%\end{align}
Then, it can be easily obtained that
\begin{align*}
\dot{V}_{i(m_i-1)}\leq -\theta_i^2 \phi_i^2(\bar{x}_{i(m_i-1)})+\theta_i^3\Delta_i|\phi_i(\bar{x}_{i(m_i-1)})|/(1-\theta_i).
\end{align*}

By similar analysis to that for $\bar{x}_{im_i}$, one gets that there exists a positive constant $T_2\geq T_1$ such that $|\bar{x}_{i(m_i-1)}(t)|\leq \Delta_i,\forall t<T_2,$ 
%\begin{align}
%|\bar{x}_{i(m_i-1)}(t)|\leq \Delta_i,\forall t<T_2,
%\end{align}
given that $\theta_i\in (0,\theta_i^*).$
Repeating the above process, it can be obtained that there exists a constant $T\geq 0$ such that if $t\geq T,$ $|\bar{x}_{ik}(t)|\leq \Delta_i, $
%\begin{align}
%|\bar{x}_{ik}(t)|\leq \Delta_i, 
%\end{align}
for all $k=2,\cdots,m_i.$	
\subsection{Proof of Lemma \ref{lemma2}}\label{proof_lemma2}
	As $|\nabla_{i}f_i(\mathbf{z}_i(t))|<\nu_1$ for all $t\geq 0$, one gets that 
\begin{align}	|\tilde{x}_{i1}(t)-\tilde{x}_{i1}(0)|&\leq \theta_i^{m_i}\Delta_i t+\prod\nolimits_{k=1}^{m_i-1}\theta_i^k\nu_1 t,
\end{align}
from \eqref{auxi_3} by utilizing the Comparison Lemma in \cite{Nonlinear2002}. 
Therefore, for any bounded $t$, $\tilde{x}_{i1}(t)$ is bounded and the system in \eqref{auxi_3} cannot have finite escape time.

The following analysis is conducted for $t\geq \tilde{T}.$
Define $V_{i1}=\int_{\tilde{T}}^{\tilde{x}_{i1}}\phi_i(\tau)d\tau.$
%\begin{align}
%V_{i1}=\int_{\tilde{T}}^{\tilde{x}_{i1}}\phi_i(\tau)d\tau.
%\end{align}
Then, for $t\geq \tilde{T},$
\begin{align}
\dot{V}_{i1}=&\phi_i(\tilde{x}_{i1})(-\theta_i^{m_i}\phi_i(\tilde{x}_{i1})+\prod\nolimits_{k=1}^{m_i-1}\theta_i^k\nabla_{i}f_i(\mathbf{z}_i(t)))\nonumber\\
\leq& -\theta_i^{m_i}\phi_i^2(\tilde{x}_{i1})/2,
\end{align}
for all $|\phi_i(\tilde{x}_{i1})|>\frac{2}{\theta_i^{m_i}}|\prod_{k=1}^{m_i-1}\theta_i^k\nabla_{i}f_i(\mathbf{z}_i(t))|.$

\textbf{Case I:} $|\phi_i(\tilde{x}_{i1})|=\Delta_i$. If this is the case, $\dot{V}_{i1}\leq -\frac{1}{2}\theta_i^{m_i}\phi_i^2(\tilde{x}_{i1}),$
%\begin{align}
%\dot{V}_{i1}\leq -\frac{1}{2}\theta_i^{m_i}\phi_i^2(\tilde{x}_{i1}),
%\end{align}
is always satisfied as for $t\geq \tilde{T}$, $\Delta_i>\frac{2}{\theta_i^{m_i}}|\prod_{k=1}^{m_i-1}\theta_i^k\nabla_{i}f_i(\mathbf{z}_i(t))|,$ indicating that for all $|\tilde{x}_{i1}(\tilde{T})|>\Delta_i,$ $|\tilde{x}_{i1}(t)|$ will evolve into the unsaturated region after some finite time. 

\textbf{Case II: $\phi_i(\tilde{x}_{i1})=\tilde{x}_{i1}$}. In this case, $\dot{V}_{i1}\leq -\frac{1}{2}\theta_i^{m_i}\phi_i^2(\tilde{x}_{i1}),$
%\begin{align}
%\dot{V}_{i1}\leq& -\frac{1}{2}\theta_i^{m_i}\phi_i^2(\tilde{x}_{i1}),
%\end{align}
for all $|\tilde{x}_{i1}|>\frac{2}{\theta_i^{m_i}}|\prod_{k=1}^{m_i-1}\theta_i^k\nabla_{i}f_i(\mathbf{z}_i(t))|.$
Therefore, by Theorem 4.18 in \cite{Nonlinear2002}, one gets that for $t\geq \tilde{T}$ 
\begin{align}
&|\tilde{x}_{i1}(t)|\leq  \beta(|\tilde{x}_{i1}(\tilde{T})|,t-\tilde{T})\nonumber\\
&+\alpha_1^{-1}(\alpha_2(\sup\nolimits_{\tilde{T}<\tau<t}2|\prod\nolimits_{k=1}^{m_i-1}\theta_i^k\nabla_{i}f_i(\mathbf{z}_i(\tau))|/\theta_i^{m_i}))\nonumber\\
\leq & \beta(|\tilde{x}_{i1}(\tilde{T})|,t-\tilde{T})+\gamma(\sup\nolimits_{\tilde{T}<\tau<t} |\nabla_{i}f_i(\mathbf{z}_i(\tau))|),\nonumber
\end{align}	
where $\gamma(\cdot)$ is a $\mathcal{K}_{\infty}$ function as  $\alpha_1(\cdot)$ and $\alpha_2(\cdot)$ are $\mathcal{K}_{\infty}$ functions (defined in the proof of Lemma \ref{Lemma1}) for all $|\tilde{x}_{i1}(\tilde{T})|<\Delta_i$. 
To this end, the conclusions have been obtained.
\subsection{Proof of Lemma \ref{lemma3}}\label{proof_lemma3}
To show the convergence property of \eqref{auxi_1},  let $V=V_1+V_2+V_3$ in which
\begin{align}
V_1=&\frac{1}{2}||[-\int_{0}^{t}\nabla_{i}f_i(\mathbf{z}_i(\tau))d\tau]_{vec}-\mathbf{y}^*||^2,\nonumber\allowdisplaybreaks\\
V_2=&\epsilon \sum\nolimits_{i=1}^N\sum\nolimits_{j=1}^N p_{ij}(c_{ij}+\rho_{ij}/2)\rho_{ij},\nonumber\allowdisplaybreaks\\
V_3=&\epsilon\sum\nolimits_{i=1}^N \sum\nolimits_{j=1}^Np_{ij}(c_{ij}-c^*)^2/2,
\end{align}
where $P=\text{diag}\{p_{ij}\}$ satisfies 
$PH+H^TP=Q$, $Q$ is a symmetric positive definite matrix as the communication graph is strongly connected, $\epsilon$ and $c^*$ are positive constants to be further quantified.
Then, 
\begin{align}\label{eq4}
\dot{V}_2=&\epsilon \sum_{i=1}^N\sum_{j=1}^N p_{ij}(c_{ij}+\frac{\rho_{ij}}{2})\dot{\rho}_{ij}+\epsilon \sum_{i=1}^N\sum_{j=1}^N p_{ij}(\dot{c}_{ij}+\frac{\dot{\rho}_{ij}}{2}+)\rho_{ij} \nonumber\\
%=&\epsilon \sum_{i=1}^N\sum_{j=1}^N p_{ij}(c_{ij}+\rho_{ij})\dot{\rho}_{ij}+\epsilon \sum_{i=1}^N\sum_{j=1}^N p_{ij}\dot{c}_{ij}\rho_{ij} \nonumber\\
=&\epsilon \sum_{i=1}^N\sum_{j=1}^N p_{ij}(c_{ij}+\rho_{ij})\dot{\rho}_{ij}+\epsilon \sum_{i=1}^N\sum_{j=1}^N p_{ij}\rho_{ij}^2.
\end{align}
In addition, 
\begin{align}\label{eq5}
\dot{V}_3
%=&\epsilon\sum\nolimits_{i=1}^N \sum\nolimits_{j=1}^Np_{ij}(c_{ij}-c^*)\dot{c}_{ij}\nonumber\\
=&\epsilon\sum\nolimits_{i=1}^N \sum\nolimits_{j=1}^Np_{ij}(c_{ij}-c^*)\rho_{ij}.
\end{align}
Combining \eqref{eq4}-\eqref{eq5}, one can derive that $\dot{V}_2+\dot{V}_3=\epsilon \sum_{i=1}^N\sum_{j=1}^N p_{ij}(c_{ij}+\rho_{ij})\dot{\rho}_{ij}+\epsilon\sum_{i=1}^N \sum_{j=1}^Np_{ij}(\rho_{ij}+c_{ij}-c^*)\rho_{ij},
$
%\begin{align}
%\dot{V}_2+\dot{V}_3=&\epsilon \sum_{i=1}^N\sum_{j=1}^N p_{ij}(c_{ij}+\rho_{ij})\dot{\rho}_{ij}\nonumber\\
%&+\epsilon\sum_{i=1}^N \sum_{j=1}^Np_{ij}(\rho_{ij}+c_{ij}-c^*)\rho_{ij},
%\end{align}
in which 
\begin{align}
&\epsilon \sum\nolimits_{i=1}^N\sum\nolimits_{j=1}^N p_{ij}(c_{ij}+\rho_{ij})\dot{\rho}_{ij} \nonumber\\
= &2\epsilon  \sum\nolimits_{i=1}^N\sum\nolimits_{j=1}^N p_{ij}(c_{ij}+\rho_{ij})\xi_{ij}\dot{\xi}_{ij}\nonumber\\
=&-\epsilon \mathbf{\xi}^T (c+\rho)(PH+H^TP)(c+\rho) \mathbf{\xi}\nonumber\\
&+2\epsilon \mathbf{\xi}^T (c+\rho)PH(\mathbf{1}_N\otimes[\nabla_{i}f_i(\mathbf{z}_i(t))]_{vec})\nonumber\\
\leq &-\epsilon \underline{\lambda}\mathbf{\xi}^T(c+\rho)(c+\rho)\mathbf{\xi}\nonumber\\
&+2\epsilon \mathbf{\xi}^T (c+\rho)PH(\mathbf{1}_N\otimes[\nabla_{i}f_i(\mathbf{z}_i(t))]_{vec}),
\end{align}
where $\underline{\lambda}$ is the minimum eigenvalue of $Q.$

Note that  $2\epsilon \mathbf{\xi}^T (c+\rho)PH(\mathbf{1}_N\otimes[\nabla_{i}f_i(\mathbf{z}_i(t))]_{vec})
\leq 2\epsilon ||\mathbf{\xi}^T(c+\rho)||||PH\mathbf{1}_N\otimes[\nabla_{i}f_i(\mathbf{z}_i)-\nabla_{i}f_i([-\int_{0}^{t}\nabla_{i}f_i(\mathbf{z}_i(\tau))d\tau]_{vec})]_{vec}|| +2\epsilon||\mathbf{\xi}^T(c+\rho)|| ||PH\mathbf{1}_N\otimes[\nabla_{i}f_i([-\int_{0}^{t}\nabla_{i}f_i(\mathbf{z}_i(\tau))d\tau]_{vec})-\nabla_{i}f_i(\mathbf{y}^*)]_{vec}||
\leq  \frac{\epsilon\underline{\lambda}}{4}\mathbf{\xi}^T(c+\rho)(c+\rho)\mathbf{\xi}+\epsilon_1 \frac{\epsilon^2\underline{\lambda}}{4}\mathbf{\xi}^T(c+\rho)(c+\rho)\mathbf{\xi}+\frac{4\epsilon N \max\{p_{ij}\}^2||H||^2\max\{l_i\}^2||H^{-1}||^2||\mathbf{\xi}||^2}{\underline{\lambda}}+\frac{4N^2 \max\{p_{ij}\}^2||H||^2\max\{l_i\}^2||[-\int_{0}^{t}\nabla_{i}f_i(\mathbf{z}_i(\tau))d\tau]_{vec}-\mathbf{y}^*||^2}{\underline{\lambda}\epsilon_1}.$
%\begin{align}
%&2\epsilon \mathbf{\xi}^T (c+\rho)PH(\mathbf{1}_N\otimes[\nabla_{i}f_i(\mathbf{z}_i(t))]_{vec})\nonumber\\
%\leq& 2\epsilon ||\mathbf{\xi}^T(c+\rho)||||PH\mathbf{1}_N\otimes[\nabla_{i}f_i(\mathbf{z}_i)-\nabla_{i}f_i([-\int_{0}^{t}\nabla_{i}f_i(\mathbf{z}_i(\tau))d\tau]_{vec})||\nonumber\nonumber\\ 
%& +2 ||PH\mathbf{1}_N\otimes[\nabla_{i}f_i([-\int_{0}^{t}\nabla_{i}f_i(\mathbf{z}_i(\tau))d\tau]_{vec})-\nabla_{i}f_i(\mathbf{y}^*)]_{vec}||\nonumber\\ 
%&\times \epsilon||\mathbf{\xi}^T(c+\rho)||\nonumber\\
%\leq & \frac{\epsilon\underline{\lambda}}{4}\mathbf{\xi}^T(c+\rho)(c+\rho)\mathbf{\xi}++\frac{\epsilon^2\underline{\lambda}}{4}\mathbf{\xi}^T(c+\rho)(c+\rho)\mathbf{\xi}\epsilon_1 \nonumber\\
%&+\frac{4\epsilon N \max\{p_{ij}\}^2||H||^2\max\{l_i\}^2||H^{-1}||^2||\mathbf{\xi}||^2}{\underline{\lambda}}\nonumber\\
%&+\frac{4N^2 \max\{p_{ij}\}^2||H||^2\max\{l_i\}^2||[-\int_{0}^{t}\nabla_{i}f_i(\mathbf{z}_i(\tau))d\tau]_{vec}-\mathbf{y}^*||^2}{\underline{\lambda}\epsilon_1}.
%\end{align}

Moreover,  $\epsilon\sum_{i=1}^N \sum_{j=1}^Np_{ij}(\rho_{ij}+c_{ij}-c^*)\rho_{ij}
\leq \frac{\epsilon\underline{\lambda}}{4}\mathbf{\xi}^T(\rho+c)(\rho+c)\mathbf{\xi}-(\epsilon \min\{p_{ij}\}c^*-\frac{\max\{p_{ij}^2\}\epsilon}{\underline{\lambda}})||\mathbf{\xi}||^2$.
%\begin{align}
%&\epsilon\sum_{i=1}^N \sum_{j=1}^Np_{ij}(\rho_{ij}+c_{ij}-c^*)\rho_{ij}\nonumber\\
%\leq &\frac{\epsilon\underline{\lambda}}{4}\mathbf{\xi}^T(\rho+c)(\rho+c)\mathbf{\xi}\nonumber\\
%&-(\epsilon \min\{p_{ij}\}c^*-\frac{\max\{p_{ij}^2\}\epsilon}{\underline{\lambda}})||\mathbf{\xi}||^2
%\end{align}
Summarizing the above inequalities, one can derive that 
\begin{align}
&\dot{V}_2+\dot{V}_3\leq  
-(\epsilon \underline{\lambda}/2-\epsilon^2\epsilon_1\underline{\lambda}/4)\mathbf{\xi}^T(c+\rho)(c+\rho)\mathbf{\xi}\nonumber\\
&+p_1 ||[-\int_{0}^{t}\nabla_{i}f_i(\mathbf{z}_i(\tau))d\tau]_{vec}-\mathbf{y}^*||^2-p_2||\mathbf{\xi}||^2,
\end{align}
where $p_1=4N^2 \max\{p_{ij}\}^2||H||^2\max\{l_i\}^2/(\underline{\lambda}\epsilon_1)$ and $p_2=\epsilon \min\{p_{ij}\}c^*-\max\{p_{ij}^2\}\epsilon/\underline{\lambda}-4\epsilon N \max\{p_{ij}\}^2||H||^2\max\{l_i\}^2||H^{-1}||^2/{\underline{\lambda}}$.
Furthermore,  
\begin{align}
&\dot{V}_1=-\mathbf{r}^T [\nabla_if_i(\mathbf{z}_i)]_{vec}\nonumber\allowdisplaybreaks\\
&=-\mathbf{r}^T [\nabla_if_i([-\int_{0}^{t}\nabla_{i}f_i(\mathbf{z}_i(\tau))d\tau]_{vec})]_{vec}\nonumber\allowdisplaybreaks\\
&	-\mathbf{r}^T[\nabla_if_i(\mathbf{z}_i)-\nabla_if_i([-\int_{0}^{t}\nabla_{i}f_i(\mathbf{z}_i(\tau))d\tau]_{vec})]_{vec}\leq\nonumber\allowdisplaybreaks\\
&-\omega||\mathbf{r}||^2+\max\{l_i\}||\mathbf{z}+\mathbf{1}_N\otimes [\int_{0}^{t}\nabla_{i}f_i(\mathbf{z}_i(\tau))d\tau]_{vec}||||\mathbf{r}||\nonumber\allowdisplaybreaks\\
&\leq-\omega||\mathbf{r}||^2+\max\{l_i\}||H^{-1}||||\mathbf{\xi}||||\mathbf{r}||\nonumber\allowdisplaybreaks\\
&\leq -(\omega-\frac{ \max\{l_i\}||H^{-1}||}{2\epsilon_1})||\mathbf{r}||^2+\frac{\max\{l_i\}||H^{-1}||\epsilon_1}{2}||\mathbf{\xi}||^2,	\nonumber
\end{align}
where $\mathbf{r}=[-\int_{0}^{t}\nabla_{i}f_i(\mathbf{z}_i(\tau))d\tau]_{vec}-\mathbf{y}^*$ is defined for notational convenience. Therefore,
\begin{align}
&\dot{V}	\leq-(\epsilon \underline{\lambda}/2-\epsilon^2\epsilon_1\underline{\lambda}/4)\mathbf{\xi}^T(c+\rho)(c+\rho)\mathbf{\xi}\nonumber\\
&-(\epsilon \min\{p_{ij}\}c^*-\max\{p_{ij}^2\}\epsilon/\underline{\lambda}-\max\{l_i\}||H^{-1}||\epsilon_1/2-\nonumber\\
&4\epsilon N \max\{p_{ij}\}^2||H||^2\max\{l_i\}^2||H^{-1}||^2/\underline{\lambda})||\mathbf{\xi}||^2-p_3||\mathbf{r}||^2,\nonumber
\end{align}
where $p_3=\omega-\frac{4N^2 \max\{p_{ij}\}^2||H||^2\max\{l_i\}^2}{\underline{\lambda}\epsilon_1}-\frac{ \max\{l_i\}||H^{-1}||}{2\epsilon_1}.$

Choose $\epsilon_1$ such that $\epsilon_1>\frac{ \max\{l_i\}||H^{-1}||}{2\omega}+\frac{4N^2 \max\{p_{ij}\}^2||H||^2\max\{l_i\}^2}{\underline{\lambda}\omega},$
%\begin{align}
%\epsilon_1>\frac{ \max\{l_i\}||H^{-1}||}{2\omega}+\frac{4N^2 \max\{p_{ij}\}^2||H||^2\max\{l_i\}^2}{\underline{\lambda}\omega},\nonumber
%\end{align}
and  $\epsilon< \frac{2}{\epsilon_1}$.
In addition, $c^*>\frac{\max\{p_{ij}^2\}\epsilon}{\underline{\lambda}\epsilon \min\{p_{ij}\}}+\frac{\max\{l_i\}||H^{-1}||\epsilon_1}{2\epsilon \min\{p_{ij}\}}+\frac{4\epsilon N \max\{p_{ij}\}^2||H||^2\max\{l_i\}^2||H^{-1}||^2}{\underline{\lambda}\epsilon \min\{p_{ij}\}}.$
%\begin{align}
%c^*>&\frac{\max\{p_{ij}^2\}\epsilon}{\underline{\lambda}\epsilon \min\{p_{ij}\}}+\frac{\max\{l_i\}||H^{-1}||\epsilon_1}{2\epsilon \min\{p_{ij}\}}\nonumber\\
%&+\frac{4\epsilon N \max\{p_{ij}\}^2||H||^2\max\{l_i\}^2||H^{-1}||^2}{\underline{\lambda}\epsilon \min\{p_{ij}\}}.
%\end{align}
Then, $\dot{V}\leq 0$
%\begin{align}
%\dot{V}\leq 0
%\end{align}
and $V$ is bounded so as $[-\int_{0}^{t}\nabla_{i}f_i(\mathbf{z}_i(\tau))d\tau]_{vec}$, $\xi_{ij}$ and $c_{ij}$. In addition, for $\dot{V}=0$, $||\mathbf{\xi}||=0,$ and $||-[\int_{0}^{t}\nabla_{i}f_i(\mathbf{z}_i(\tau))d\tau]_{vec}-\mathbf{y}^*||=0.$
By further recalling the definition of $c_{ij}$, one can obtain that it is monotonically increasing, and hence it converges to some finite value as it is bounded.

\subsection{Proof of Theorem \ref{them1}}\label{proof_them1}
The proof  can be completed by several steps.

\textbf{Step 1: Analyze the evolution of the system for $t\leq T$ and $t>T$, respectively.}
According to Lemmas \ref{Lemma1}-\ref{lemma3}, there is no finite escape time for $\bar{x}_{ik}$, $z_{ij}$ and $c_{ij}$ where $i,j\in\mathcal{V}$ and $k\in\{1,2,\cdots,m_i\}$, indicating that for $t<T$, $\bar{x}_{ik}(t)$, $z_{ij}(t)$ and $c_{ij}(t)$ are all bounded. Moreover, by Lemma \ref{Lemma1}, it can be obtained that for $t>T$,
\begin{align}\label{step_1}
\dot{\bar{x}}_{i1}=&-\theta_i^{m_i}\phi_i(\bar{x}_{i1}+\prod_{k=1}^{m_i-1}\theta_i^k\int_{0}^{t}\nabla_{i}f_i(\mathbf{z}_i(\tau))d\tau),\nonumber\\
\dot{z}_{ij}=&-(c_{ij}+\rho_{ij})\xi_{ij},~~~~~\dot{c}_{ij}=\rho_{ij},
\end{align}	
where $\rho_{ij}=\xi_{ij}^2.$

\textbf{Step 2: Analyze the evolution of $\tilde{x}_{i1}$ for $t\rightarrow \infty$.}
By Lemma \ref{lemma3},	$\lim_{t\rightarrow \infty}||-[\int_{0}^{t}\nabla_{j}f_j(\mathbf{z}_j(\tau))d\tau]_{vec}-\mathbf{y}^*||=0,$
%\begin{align}
%\lim_{t\rightarrow \infty}||-[\int_{0}^{t}\nabla_{j}f_j(\mathbf{z}_j(\tau))d\tau]_{vec}-\mathbf{y}^*||=0,
%\end{align} 
and hence, by Barbarlat's Lemma \cite{Nonlinear2002}, one gets that $\lim_{t\rightarrow \infty} \nabla_{j}f_j(\mathbf{z}_j(t))= 0,$
%\begin{align}
%\lim_{t\rightarrow \infty} \nabla_{j}f_j(\mathbf{z}_j(t))= 0,
%\end{align}
indicating that there exists a positive constant $T_1>T$ such that for all $t>T_1,$
\begin{align}
|\tilde{x}_{i1}(t)|\leq \beta(|\tilde{x}_{i1}(T_1)|,t-T_1)+\gamma(\sup_{T_1<\tau<t} |\nabla_{i}f_i(\mathbf{z}_i(\tau))|),\nonumber
\end{align}
by Lemma \ref{lemma3}. Recalling that $\lim_{t\rightarrow \infty} \nabla_{j}f_j(\mathbf{z}_j(t))= 0,$ it is clear that  
$
\lim_{t\rightarrow \infty} |\tilde{x}_{i1}(t)|=0.
$

\textbf{Step 3: Analyze the steady state of $\bar{x}_{ik}$ for $k\in\{2,\cdots,m_i\}$.}
Recalling the dynamics in \eqref{eq_transf}, it can be obtained that for $t>T$,
\begin{align}\label{eqqq}
\dot{\bar{x}}_{i2}=-\theta_i^{m_i-1}\bar{x}_{i2}-\theta_i^{m_i}\phi_i(\tilde{x}_{i1}).
\end{align}
Regard $v_{im_i}=\theta_i^{m_i}\phi_i(\tilde{x}_{i1})$ as a virtual control input. Then, it can be easily obtained that the system in \eqref{eqqq} is input-to-state stable by defining a Laypunov candidate function as $\bar{V}=\frac{1}{2}\bar{x}_{i2}^2.$
As for $t\rightarrow \infty,$ $|v_{im_i}(t)|$ vanishes to zero, one gets that $\lim_{t\rightarrow \infty} |\bar{x}_{i2}(t)|=0.$
 Moreover, for $t>T,$
\begin{align}\label{eqq1q}
\dot{\bar{x}}_{i3}=-\theta_i^{m_i-2}\bar{x}_{i3}-\theta_i^{m_i-1}\bar{x}_{i2}-\theta_i^{m_i}\phi_i(\tilde{x}_{i1}).
\end{align}
Let $v_{i(m_i-1)}=-\theta_i^{m_i-1}\bar{x}_{i2}-\theta_i^{m_i}\phi_i(\tilde{x}_{i1})$ be the virtual control input, then, it can be easily obtained that \eqref{eqq1q} is input-to-state stable. Noticing that $\lim_{t\rightarrow \infty} |v_{i(m_i-1)}(t)|=0,$ one gets that  
$
\lim_{t\rightarrow \infty} |\bar{x}_{i3}(t)|=0.
$ 

Repeating the above process, one gets that 
\begin{align*}
\lim_{t\rightarrow \infty} |\bar{x}_{ik}(t)|=0,\forall k\in\{2,\cdots,m_i\}.
\end{align*}

\textbf{Step 4: Analyze the steady state of $\mathbf{y}(t)$.}
Recalling that $x_i=T_i^{-1}\bar{x}_i,$ 
and $y_i=x_{i1},$ one can obtain that 
\begin{align}
y_i=\bar{x}_{i1}/(\prod\nolimits_{k=1}^{m_i-1}\theta_i^k)+\sum\nolimits_{k=2}^{m_i}g_k(\theta_i)\bar{x}_{ik},
\end{align}
where $g_k(\theta_i)$ denotes some function of $\theta_i$.

Note that  by Lemma \ref{lemma3}, $\lim_{t\rightarrow \infty}||-[\int_{0}^{t}\nabla_{i}f_i(\mathbf{z}_i(\tau))d\tau]_{vec}-\mathbf{y}^*||=0,$
%\begin{align}
%\lim_{t\rightarrow \infty}||-[\int_{0}^{t}\nabla_{i}f_i(\mathbf{z}_i(\tau))d\tau]_{vec}-\mathbf{y}^*||=0,
%\end{align}
and $\lim_{t\rightarrow \infty} ||\mathbf{z}(t)+\mathbf{1}_N\otimes [\int_{0}^{t}\nabla_{i}f_i(\mathbf{z}_i(\tau))d\tau]_{vec}||=0,$
%\begin{align}
%\lim_{t\rightarrow \infty} ||\mathbf{z}(t)+\mathbf{1}_N\otimes [\int_{0}^{t}\nabla_{i}f_i(\mathbf{z}_i(\tau))d\tau]_{vec}||=0,
%\end{align}
then it is clear that 
$
\lim_{t\rightarrow \infty} ||\mathbf{y}(t)-\mathbf{y}^*||=0,
$
by further noticing that  $\lim_{t\rightarrow \infty} y_i(t)= \bar{x}_{i1}(t)/(\prod_{k=1}^{m_i-1}\theta_i^k),$  and $\lim_{t\rightarrow \infty} \bar{x}_{i1}(t)=\prod_{k=1}^{m_i-1}\theta_i^k y_i^*$ for all $i\in\mathcal{V}$.  To this end, the conclusions are apparent.
\subsection{Proof of Corollary \ref{cor1}}\label{proof_cor1}
In this case, 
\begin{align}\label{step_11}
\dot{x}_{i1}=&-\phi_i(x_{i1}+\int_{0}^{t}\nabla_{i}f_i(\mathbf{z}_i(\tau))d\tau),\nonumber\\
\dot{z}_{ij}=&-(c_{ij}+\rho_{ij})\xi_{ij},~~~~~\dot{c}_{ij}=\rho_{ij},
\end{align}	
where $\rho_{ij}=\xi_{ij}^2.$
Following Step 2 in the proof of Theorem \ref{them1}, $\lim_{t\rightarrow \infty} |\tilde{x}_{i1}(t)|=0,$
%\begin{align}
%\lim_{t\rightarrow \infty} |\tilde{x}_{i1}(t)|=0,
%\end{align} 
in which $\tilde{x}_{i1}(t)=x_{i1}+\int_{0}^{t}\nabla_{i}f_i(\mathbf{z}_i(\tau))d\tau$ in this case.
Moreover, by Lemma \ref{lemma3},  $\lim_{t\rightarrow \infty}||[-\int_{0}^{t}\nabla_{i}f_i(\mathbf{z}_i(\tau))d\tau]_{vec}-\mathbf{y}^*||=0,$
%\begin{align}
%\lim_{t\rightarrow \infty}||[-\int_{0}^{t}\nabla_{i}f_i(\mathbf{z}_i(\tau))d\tau]_{vec}-\mathbf{y}^*||=0,
%\end{align}
and hence 
$
\lim_{t\rightarrow \infty}||\mathbf{y}(t)-\mathbf{y}^*||=0,
$
%\begin{align*}
%\lim_{t\rightarrow \infty}||\mathbf{y}(t)-\mathbf{y}^*||=0,
%\end{align*}
from which the conclusions can be easily obtained and thus, the rest of the proof is omitted. 

\subsection{Proof of Corollary \ref{col_5}}\label{proof_col_5}
To prove the result, define an auxiliary system as 
\begin{align}
\dot{z}_{ij}=-c_{ij}\xi_{ij}~~~~\dot{c}_{ij}=\xi_{ij}^2.\label{auxi_6}
\end{align}
Define $
V=\frac{1}{2}||-[\int_{0}^{t}\nabla_{j}f_j(\mathbf{z}_j(\tau))d\tau]_{vec}-\mathbf{y}^*||^2+(\mathbf{z}+\mathbf{1}_N\otimes[\int_{0}^{t}\nabla_{j}f_j(\mathbf{z}_j(\tau))d\tau]_{vec})^TH(\mathbf{z}+\mathbf{1}_N\otimes[\int_{0}^{t}\nabla_{j}f_j(\mathbf{z}_j(\tau))d\tau]_{vec}) +\sum_{i=1}^{N}\sum_{j=1}^{N}(c_{ij}-c_{ij}^*)^2.$
%\begin{align}
%&V=\frac{1}{2}||-[\int_{0}^{t}\nabla_{j}f_j(\mathbf{z}_j(\tau))d\tau]_{vec}-\mathbf{y}^*||^2+\nonumber\\
%&(\mathbf{z}+\mathbf{1}_N\otimes[\int_{0}^{t}\nabla_{j}f_j(\mathbf{z}_j(\tau))d\tau]_{vec})^TH\times\nonumber\\
%&(\mathbf{z}+\mathbf{1}_N\otimes[\int_{0}^{t}\nabla_{j}f_j(\mathbf{z}_j(\tau))d\tau]_{vec}) +\sum_{i=1}^{N}\sum_{j=1}^{N}(c_{ij}-c_{ij}^*)^2.\nonumber
%\end{align}
Then, following the proof of Lemma \ref{lemma3} and \cite{YEAUTO2021},  one gets that 
 $\lim_{t\rightarrow\infty} ||-[\int_{0}^{t}\nabla_{j}f_j(\mathbf{z}_j(\tau))d\tau]_{vec}-\mathbf{y}^*||=0,$
%\begin{align}
%\lim_{t\rightarrow\infty} ||-[\int_{0}^{t}\nabla_{j}f_j(\mathbf{z}_j(\tau))d\tau]_{vec}-\mathbf{y}^*||=0,
%\end{align}
and
 $\lim_{t\rightarrow\infty}||\mathbf{z}+\mathbf{1}_N\otimes[\int_{0}^{t}\nabla_{j}f_j(\mathbf{z}_j(\tau))d\tau]_{vec}||=0$
%\begin{align}
%\lim_{t\rightarrow\infty}||\mathbf{z}+\mathbf{1}_N\otimes[\int_{0}^{t}\nabla_{j}f_j(\mathbf{z}_j(\tau))d\tau]_{vec}||=0,
%\end{align}
for \eqref{auxi_6}. 
The rest of the proof follows those in Theorem \ref{them1} and is omitted. 

\begin{thebibliography}{99}
\bibitem{YETAC2017} M. Ye and G. Hu, ``Distributed Nash equilibrium seeking by a consensus based approach," \emph{IEEE Transactions on Automatic
		Control}, vol. 62, no. 9, pp. 4811-4818, 2017.
\bibitem{wangauto2020}X. Wang, X. Sun, A. Teel and K. Liu, ``Distributed robust Nash
equilibrium seeking for aggregative games under persistent attacks: a hybrid systems approach," \emph{Automatica}, 122, 109255, 2020.
\bibitem{YEAUTO2021} M. Ye and G. Hu, ``Adaptive approaches for fully distributed Nash equilibrium seeking in networked games," \emph{Automatica}, vol. 129, no. 3, 109661, 2021.
\bibitem{ZhengCCC2020}Z. Zheng, Y. Zhang, B. Zhang, R. Yin, ``Distributed Nash equilibrium seeking of aggregative games for high-order systems," \emph{Chinese Control Conference}, pp. 4789-4794, 2020.
\bibitem{LiuICCA2020}X. Liu, Y. Zhang, X. Wang, and H. Ji, ``Distributed Nash equilibrium seeking design in network of uncertain linear multi-agent systems," \emph{IEEE International Conference on Control and Automation,} pp. 147-152, 2020.
\bibitem{RomanoTCSN20}A. R. Romano, L. Pavel, ``Dynamic NE seeking for multi-integrator networked agents with disturbance rejection," \emph{IEEE Transactions on Control of Network Systems}, vol. 7, no. 1, pp. 129-139, 2020.
\bibitem{YETAC2021} M. Ye, ``Distributed Nash equilibrium seeking for games in
systems with bounded control inputs," \emph{IEEE Transactions on Automatic Control}, vol. 66, no. 8, pp. 3833-3839, 2021. 
\bibitem{Persis2019}C. De Persis and S. Grammatico, ``Distributed averaging integral Nash equilibrium seeking on networks," \emph{Automatica}, vol. 110, pp. 108548, 2019.
\bibitem{Bianchi2021}M. Bianchi, S. Grammatico, ``Continuous-time fully distributed
generalized Nash equilibrium seeking for multi-integrator agents,"  \emph{Automatica,} vol. 129, 109660, 2021.
\bibitem{AiIJRNC2021}X. Ai and L. Wang, ``Distributed adaptive Nash equilibrium seeking and
disturbance rejection for noncooperative games of
high-order nonlinear systems with input saturation and input delay," \emph{International Journal of Robust and Nonlinear Control}, vol. 31, pp. 2827-2846, 2021.
\bibitem{YETAC183}M. Ye, G. Hu, L. Xie and S. Xu, ``Differentially private distributed Nash equilibrium seeking for aggregative games," \emph{IEEE Transactions on Automatic Control}, published online, DOI: 10.1109/TAC.2021.3075183.
\bibitem{LuTcyber2019}
K. Lu and Q. Zhu, ``Nonsmooth Continuous-Time Distributed Algorithms for Seeking Generalized Nash Equilibria of Noncooperative Games via Digraphs", \emph{IEEE Transactions on Cybernetics}, published online, DOI: 
10.1109/TCYB.2021.3049463.

\bibitem{KoshalOR2016}J. Koshal, A. Nedic and U. Shanbhag, ``Distributed algorithms for
aggregative games on graphs," \emph{Operations Research}, vol. 64, pp. 680-
704, 2016.
\bibitem{SalehisadaghianiAuto16} F. Salehisadaghiani and L. Pavel, ``Distributed Nash equilibrium seeking:
A gossip-based algorithm," \emph{Automatica}, vol. 72, pp. 209-216,
2016.
\bibitem{QinTAC}J. Qin, Q. Ma, W. X. Zheng, H. Gao, and Y. Kang, ``Robust $H_{\infty}$ group consensus for interacting clusters of  
integrator agents," \emph{IEEE Transactions on Automatic Control,} vol. 62, no. 7, pp. 3559-3566, 2017. 
\bibitem{DongCNS14}X. Dong, J. Xi, G. Lu, and Y. Zhong, ``Formation control for high-order linear time-invariant multiagent systems with time delays," \emph{IEEE Transactions on Control of Network Systems}, vol. 1, no. 3, pp. 232-240, 2014.
\bibitem{ShaoTAC20}J. Shao, W. X. Zheng, L. Shi, Y. Cheng, ``Leader-follower Cucker-Smale flocking with lossy links and general weight functions," \emph{IEEE Transactions on Automatic Control}, published online, DOI: 10.1109/TAC.2020.3046695, 2020.

\bibitem{Nonlinear2002}H. K. Khalil, \emph{Nonlinear Systems,}  Upper Saddle River, NJ:
Prentice Hall, 2002.
\bibitem{WangTCSII29}X. Li, Z. Sun, Y. Tang and H. R. Karimi, ``Adaptive event-triggered consensus of multi-agent systems on directed graphs," \emph{IEEE Transactions on Automatic Control,} vol. 66, no. 4, pp. 1670-1685, 2021.
\bibitem{SussmannTAC94} H. J. Sussmann, E. D. Sontag and Y. Yang, ``A general result on stabilization of linear systems using bounded controls," \emph{IEEE Transactions on Automatic Control,} vol. 39, no. 12, pp. 2411-2425, 1994.


\end{thebibliography}
\end{document}